\def\url@leostyle{%
 \@ifundefined{selectfont}{\def\UrlFont{\sf}}{\def\UrlFont{\scriptsize\ttfamily}}} \makeatother\urlstyle{leo}
\newtheorem{theorem}{Theorem}
\newtheorem{proposition}[theorem]{Proposition}
\newtheorem{lemma}[theorem]{Lemma}
\theoremstyle{definition}
\theoremstyle{remark}
\newtheorem{remark}[theorem]{Remark}
\numberwithin{equation}{section}
\numberwithin{theorem}{section}
\def\cA{\mathcal{A}}
\def\cB{\mathcal{B}}
\def\cC{\mathcal{C}}
\def\cE{\mathcal{E}}
\def\cF{\mathcal{F}}
\def\cM{\mathcal{M}}
\def\cN{\mathcal{N}}
\def\cP{\mathcal{P}}
\def\cQ{\mathcal{Q}}
\def\cT{\mathcal{T}}
\def\bE{\mathbb{E}}
\def\bF{\mathbb{F}}
\def\bN{\mathbb{N}}
\def\bP{\mathbb{P}}
\def\bQ{\mathbb{Q}}
\def\bR{\mathbb{R}}
\def\bS{\mathbb{S}}
\def\sF{\mathscr{F}}
\newcommand{\1}{\mathbbm{1}}            
\newcommand{\set}[1]{\{#1\}}            
\title{ \vspace{-3em} 
    Data-Driven Nonparametric Robust Control under Dependence Uncertainty
}
\author{
        Erhan Bayraktar\thanks{E. Bayraktar is partially supported by the National Science Foundation under grant DMS-2106556 and by the Susan M. Smith chair.
        \newline \hspace*{1.45em}  Department of Mathematics, University of Michigan, Ann Arbor
        \newline \hspace*{1.45em}  530 Church Street, Ann Arbor, MI 48109, USA
        \newline \hspace*{1.45em}  Email: \url{erhan@umich.edu}, URL: \url{https://sites.lsa.umich.edu/erhan/}}
        \and
        \and Tao Chen\,\thanks{
        \newline \hspace*{1.45em}  Email: \url{chenta@umich.edu}, URL: \url{http://taochen.im}
        }}
\date{ {\small 
This Version: \today
}}
\begin{document}

\maketitle

{\footnotesize
\begin{tabular}{l@{} p{350pt}}
  \hline \\[-.2em]
  \textsc{Abstract}: \ & We consider a multi-period stochastic control problem where the multivariate driving stochastic factor of the system has known marginal distributions but uncertain dependence structure. To solve the problem, we propose to implement the nonparametric adaptive robust control framework. We aim to find the optimal control against the worst-case copulae in a sequence of shrinking uncertainty sets which are generated from continuously observing the data.  Then, we use a stochastic gradient descent ascent algorithm to numerically handle the corresponding high dimensional dynamic inf-sup optimization problem. We present the numerical results in the context of utility maximization and show that the controller benefits from knowing more information about the uncertain model.\\[1em]
\textsc{Keywords:} \ & nonparametric adaptive robust control, model uncertainty, stochastic control, adaptive robust dynamic programming, Wasserstein distance, Markovian control problem, utility maximization, copula, machine learning, stochastic gradient descent ascent.
 \\
\textsc{MSC2010:} \ & 49L20, 49J55, 93E20, 93E35, 60G15, 65K05, 90C39, 90C40, 91G10, 91G60, 62G05 \\[1em]
  \hline
\end{tabular}
}


\section{Introduction}
In this paper we propose a nonparametric approach for solving a stochastic Markovian control problem in discrete time under a special type of uncertainty.
We assume that the multivariate driving random factor of the underlying stochastic process has known marginals and unknown dependence.
Naturally, to deal with such kind of uncertainty, one can choose a parametric family of copula functions and learn the unknown parameter from the observed data.
In this work, we avoid making such postulation to prevent model misspecification. When it comes to handling model uncertainty, there are different approaches, parametric and nonparametric, developed in the past decades to incorporate learning into solving control problems with unknown system models (cf. \cite{KV2015}, \cite{CG91}, \cite{Rieder1975}, \cite{CM2020}).
However, earlier studies show that a pure learning approach without awareness of the model risk is prone to risk caused by estimation error and often leads to overly aggressive controls and system outcomes with high variances.
On the other hand, the central idea of robust control goes back to \cite{GS1989}.
A large body of research have been devoted to this area since then, and produced fruitful results.
Robust techniques are extremely successful in dealing with model risk but if the learning phase is lacking in the framework, corresponding controls can be overly conservative and even trivial.
Our work aims to address all the issues mentioned above when handling dependence uncertainty through applying the nonparametric adaptive robust methodology proposed in \cite{BC2022} and developing an efficient numerical scheme for implementing such method.
The main idea of adaptive robust control is to consider a specific type of sequential games: the controller first constructs a sequence of uncertainty sets by continuously observing the system data; then, at each time step the nature chooses one model from the time-$t$ uncertainty set that is the worst for the controller; finally, the controller will apply an optimal control in response, and both players (the controller and the nature) repeat the same process at latter time steps.
Note that such framework is data-driven and the learning phase is integrated into the optimization phase.
Regarding constructing the desired uncertainty sets, we refer to \cite{BCC2017} and \cite{Bhudisaksang2021} for the parametric case in discrete time and continuous time, respectively.
Discussion of constructing time dependent uncertainty sets in discrete time in the nonparametric case can be found in our earlier work \cite{BC2022}.
In summary, to deal with the dependence uncertainty, we use the method described in the aforementioned papers to build uncertainty sets in terms of the Wasserstein ball centered at the nonparametric estimators of the copula by utilizing the concentration results for the empirical distribution and the Wasserstein distance (cf. \cite{BGM1999}, \cite{FG2015}), and formulate the corresponding robust control problem.
Practically, numerical search of the worst-case copula in a Wasserstein ball is extremely difficult.
One can utilize the results presented in \cite{GK17} to obtain a duality result for solving the relevant inf-sup optimization problem.
However, as we will see in the sequel, the Bellman equation derived in this framework has unavoidable high dimension.
Thus, solving such optimization problem using brute force is virtually impossible.
Towards this end, we design a stochastic gradient descent ascent algorithm to mitigate the computational burden.
On top of that, we use the Gaussian process regression model to approximate the corresponding value function.
The gradient of such approximation can be explicitly computed, and this fact increases the numerical efficiency in the dynamic programming backward recursion.

The rest of the paper is organized as follows. We begin Section~\ref{sec:setup} with setting up the model and then discuss the stochastic process that represents the learning of the unknown dependence, and the construction of the uncertainty sets which contain all the considered copulae at different time steps.
Section~\ref{sec:adaptive-robust} is dedicated to the formulation of the nonparametric adaptive robust control framework. We investigate the solution of the nonparametric adaptive robust control problem in Section~\ref{sec:solution}.
In this section, we prove the Bellman principle of optimality for the problem and show the existence of measurable worst-case model selector as well as the existence of measurable optimal control.
We also discuss the comparison among several different methods that can be used to handle the stochastic control problem under dependence uncertainty.
Finally, in Section~\ref{sec:numerics} we consider an illustrative example.
Namely, the uncertain utility maximization problem where the investor needs to allocate the wealth among the money market account and several risky assets without knowing the true dependence of return processes of the risky assets.
We apply the nonparametric adaptive robust control approach to such problem and provide a numerical solver by using machine learning techniques such as stochastic gradient descent ascent and the Gaussian process.
Numerical results presented in this section show that knowing more about the distribution of the underlying stochastic process leads to optimal trading strategy that is more balanced between profit seeking and risk aversion.

\section{Nonparametric Adaptive Robust Control Problem Subject to Dependence Uncertainty}\label{sec:setup}

Let  $(\Omega, \sF)$ be a measurable space,  and $T\in \bN$ be a fixed time horizon. Let $\cT=\set{0,1,2,\ldots,T}$, $\cT'=\set{0,1,2,\ldots,T-1}$, and $\cT''=\set{1,2,\ldots,T}$.
On the space $(\Omega, \sF)$ we consider a controlled stochastic process $X=\{X_t,\ t\in\cT\}$ taking values in $\bR^d$ with dynamics
\begin{align}
  X_{t+1}=S(t,X_t,\varphi_t,Z_{t+1}), \quad t\in\cT'.
\end{align}
The above $Z_t=(Z^{(1)}_t,\ldots,Z^{(n)}_t)$, $t\in\cT''$, is an $n$-dimensional i.i.d. sequence adapted to the natural filtration of the process $X$ which is denoted by $\bF=\{\cF_t, t\in\cT\}$.
The process $\varphi=\set{\varphi_t, t\in\cT'}$ is also $\bF$-adapted and takes values in a compact subset $A\subset\bR^m$.
For every $t\in\cT'$, the function $S(t,\cdot,\cdot,\cdot):\bR^d\times A\times\bR^n\to\bR^d$ is deterministic and continuous.
In addition, we denote by $\cA_t$ the set of all processes that take values in $A$ and are adapted to the filtration $\bF_t:=\{\cF_s, t\leq s\leq T-1\}$.
Each element in $\cA_t$ is called an admissible control starting at time $t$, and we use the convention $\cA=\cA_0$.
In this work, we assume that the process $Z$ is observable, and for every fixed $t\in\cT''$, the marginal distributions of $Z_t$ are known to the controller, but the dependence structure among marginals is unknown and needs to be learned.
We write $\cP(\bR^n)$ as the set of all distributions on $\bR^n$.
Finally, let $\ell:\bR^d\to\bR$ be continuous, bounded from above, and playing the role of the loss function.
In this work, denote by $F_*$ the true distribution of $Z_t$, $t\in\cT''$, we will formulate and solve a robust optimization problem aiming to minimize the expected loss with the restriction that the worst-case distribution of $Z_t$, $t\in\cT''$, matches the marginals of $F_*$.
To proceed, we specify the following notations which will be used throughout.
\begin{itemize}
  \item
  For any $z\in\bR^n$, $z^{(i)}$, $i\in\cN:=\{1,\ldots,n\}$, is the $i$th component of $z$.
  \item
  For any CDF $F$ of a $n$-dimensional random variable, $F^{(i)}$, $i\in\cN$, is the $i$th marginal CDF.
  \item
  For any CDF $F$ of a $n$-dimensional random variable, $F^{-1}=(F^{(1),-1},\ldots,F^{(n),-1})$.
  \item
  For any CDF $F$ of a $n$-dimensional random variable and $z\in\bR^n$,
  \begin{align*}
  F\circ z=&(F^{(1)}(z^{(1)}),\ldots,F^{(n)}(z^{(n)})),\\
  F^{-1}\circ z=&(F^{(1),-1}(z^{(1)}),\ldots,F^{(n),-1}(z^{(n)})).
  \end{align*}
  \item
  For any function $f$ on $\bR^n$, and CDF $F$ of a $n$-dimensional random variable,
  \begin{align*}
    f\circ F=&f(F^{(1)},\ldots,F^{(n)}),\\
    f\circ F^{-1}=&f(F^{(1),-1},\ldots,F^{(n),-1}).
  \end{align*}
\end{itemize}
In order to deal with the dependence uncertainty, we will take the copula perspective and note that
\begin{align}\label{eq:copula}
F_*(z)=C_*\left(F^{(1)}_*\left(z^{(1)}\right),\ldots,F^{(n)}_*\left(z^{(n)}\right)\right),
\end{align}
where $C_*$ is the unknown true copula function of $Z_t$.
Here, we make a standing assumption that $F^{(i)}_*$, $i\in\cN$, is continuous and strictly increasing.
Consequently, the copula function $C_*$ is unique.
In the sequel, we will write $\bP_F$ and $\bE_F$ as the probability measure and expectation, respectively, associated to the distribution function $F$.

In the first step, having in mind the possibility that the parametric family which $C_*$ belongs to could be unknown, we need to define an estimator for $C_*$ based on the observed data of $Z$ in a nonparametric manner.
There are essentially two different ways to achieve this.
One is to use the up-to-time $t$ information of $Z$ to construct a multi-dimensional empirical distribution $\widehat{F}_t$ for $F_*$, and define an estimator of $C_*$ via \eqref{eq:copula}.
The other is to consider a kernel based estimator of $F_*$ which is a smoothed version of $\widehat{F}_t$ instead.
One example in such family of methods is the so-called perturbed empirical distribution.
Both approaches have their advantages: using the empirical distribution will simplify the numerical computation; while the kernel based method offers a better estimator that is more in line with the assumption that $F^{(i)}_*$, $i\in\cN$, are continuous.
To this end, with slight abuse of notations, we choose the avenue of the first method mentioned earlier and denote by $\widehat{F}_t$ the empirical distribution of $F_*$ based on information up to time $t$, where $\widehat{F}_0$ is constructed by observing historical data of $Z$ with sample size $t_0$.
It is not hard to see that $\widehat{F}_t$ satisfies the following updating rule
\begin{align}\label{eq:ecdf}
\widehat{F}_{t+1}(z)=\frac{(t_0+t)\widehat{F}_t(z)+\prod_{i\in\cN}\1_{Z^{(i)}_{t+1}\leq z^{(i)}}}{t_0+t+1},\quad z\in[0,1]^n.
\end{align}
Next, we define
\begin{align}\label{eq:Chat}
\widehat{C}_t(u)=\widehat{F}_t(F^{-1}_*\circ u), \quad u\in[0,1]^n,
\end{align}
and $\widehat{C}_t$ is an estimator of the copula $C_*$.
Moreover, we have
\begin{align}\label{eq:func_r}
\widehat{C}_{t+1}(u)=\frac{(t_0+t)\widehat{C}_t(u)+\prod_{i\in\cN}\1_{[0,u_i]}\left(F_*^{(i)}\left(Z^{(i)}_{t+1}\right)\right)}{t_0+t+1}=:R(t,\widehat{C}_t,Z_{t+1}), \quad u\in[0,1]^n.
\end{align}

\begin{remark}
We want to stress that even though $F_*$ is uncertain, but we assume that the marginals $F^{(i)}_*$, $i\in\cN$, are known, and therefore due to our definition $F^{-1}_*$ is also well-known to the controller.
\end{remark}

In many works, regardless of knowing $F^{(i)}_*$, $i\in\cN$, or not, the empirical copula is defined as $\widehat{C}_t(u)=\widehat{F}_t(\widehat{F}^{-1}_t\circ u)$ which means using the empirical marginals instead of the true ones to construct the estimator for the copula.
Note that in such construction, due to the discontinuity of $\widehat{F}_t$, the resulting $\widehat{C}_t$ is not necessarily a copula.
On the other hand, the estimator of $C_*$ defined in \eqref{eq:Chat} is not a copula either. Hence, any application or computation purely based on either of these two estimators could be problematic.
We argue that, for this reason, one should consider a robust framework to mitigate the inherent risk comes from the statistical estimation procedure.

In rest of the paper, we write
$$
\widehat{F}_{t+1}(z)=\frac{(t_0+t)\widehat{F}_t(z)+H_{t+1}(z-Z_{t+1})}{t_0+t+1},\quad z\in\bR^n,
$$
where $H_t$ is a sequence of continuous distribution function such that $\lim_{t\to\infty}H_t=\1_{[0,\infty)^n}$ if $t$ is allowed to go to infinity, or $H_t\equiv\1_{[0,\infty)^n}$ for any considered $t$.
The former choice corresponds to constructing $\widehat{F}$ as the so-called perturbed empirical distribution and the latter is the case of choosing $\widehat{F}$ to be the regular empirical distribution.
We will prove all the technical results for both choices as our theory works for either one of the two possible frameworks.

To proceed, we fix the notations $\widehat{F}$ and $\widehat{C}$ for estimators defined as in \eqref{eq:ecdf} and \eqref{eq:Chat}, and assume that, for any $0\leq\alpha\leq1$, there exists a deterministic function $r(\alpha,t_0,t)$ decreasing in $t\in\cT'$ such that
\begin{align}\label{eq:concentration}
\bP(d_{W,p}(\widehat{C}_t,C_*)\leq r(\alpha,t_0,t))\geq1-\alpha,
\end{align}
where $d_{W,p}$ is the Wasserstein distance between distributions in $\cP([0,1]^n)$ of order $p\geq1$.
In application, regarding construction of $r(\alpha,t_0,t)$, we will use the concentration results for the empirical distribution in \cite[Theorem 2]{FG2015} by imposing necessary integrability assumptions on $C_*$.
The take-away is that there exists some function $r(\alpha,t_0,t)$ satisfying \eqref{eq:concentration}.

Next, we model the uncertainty of $C_*$, by using \eqref{eq:concentration}, in terms of the confidence region as
\begin{align}
\cC^\alpha_t(\widehat{C}_t)=\left\{C\in\cP([0,1]^n):d_{W,p}(\widehat{C}_t,C)\leq r(\alpha,t_0,t)\right\},
\end{align}
and we call $\cC^\alpha_t$ the uncertainty set of $C_*$.
Since $[0,1]^n$ is a compact subset of $\bR^n$, then $d_{W,p}$ is a metric on $\cP([0,1]^n)$.
Throughout, we consider the metric space $(\cP([0,1]^n),d_{W,p})$.
As mentioned earlier, we will adopt the adaptive robust control framework to our setup in this project.
Hence, we are going to find the worst-case copula function in $\cC^\alpha_t$ and search for the optimal control in reaction to such worst-case copula.
One needs to realize that $\cC^\alpha_t$ includes all types of distributions on $[0,1]^n$ and in general these distributions do not necessarily have uniform marginals.
In other words, not all $C\in\cC^\alpha_t$ are copulae, and this issue will be addressed in the sequel.

Before we discuss and formulate the nonparametric adaptive robust control problem under dependence uncertainty, we first provide the following technical results for preparation.

\begin{lemma}\label{lemma:r-cont}
For fixed $t\in\cT'$, the mapping $R(t,\cdot,\cdot):\cP([0,1]^n)\times\bR\to\cP([0,1]^n)$ is continuous.
\end{lemma}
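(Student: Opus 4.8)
The plan is to exploit the fact that, for fixed $t$, the map $R(t,\cdot,\cdot)$ returns a \emph{fixed-weight mixture} of two probability measures, and then to bound the Wasserstein distance between two such mixtures by the convexity (superposition of couplings) property of $d_{W,p}$. First I would rewrite $R(t,C,z)$ in measure form: writing $\mu_C$ for the measure on $[0,1]^n$ whose CDF is $C$, and $\nu_z$ for the ``new observation'' measure, one has
\begin{align*}
R(t,C,z)=\frac{t_0+t}{t_0+t+1}\,\mu_C+\frac{1}{t_0+t+1}\,\nu_z .
\end{align*}
In the regular empirical case $\nu_z=\delta_{F_*\circ z}$ is the point mass at $F_*\circ z\in[0,1]^n$, since $\prod_{i\in\cN}\1_{[0,u_i]}(F^{(i)}_*(z^{(i)}))$ is precisely the CDF of $\delta_{F_*\circ z}$; in the perturbed case $\nu_z$ is the law of $F_*\circ(z+W)$ with $W\sim H_{t+1}$, whose CDF at $u$ equals $H_{t+1}(F^{-1}_*\circ u-z)$ by the continuity and strict monotonicity of each $F^{(i)}_*$. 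The weights $\lambda_1=\tfrac{t_0+t}{t_0+t+1}$ and $\lambda_2=\tfrac{1}{t_0+t+1}$ are constants because $t$ is fixed.

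Next I would invoke the convexity of $d_{W,p}^p$ under common-weight mixing. If $\pi_1$ is an optimal coupling of $\mu_{C_1},\mu_{C_2}$ and $\pi_2$ an optimal coupling of $\nu_{z_1},\nu_{z_2}$ (both exist, the state space being compact), then $\lambda_1\pi_1+\lambda_2\pi_2$ is a coupling of $R(t,C_1,z_1)$ and $R(t,C_2,z_2)$, which yields
\begin{align*}
d_{W,p}\big(R(t,C_1,z_1),R(t,C_2,z_2)\big)^p\leq\lambda_1\,d_{W,p}(C_1,C_2)^p+\lambda_2\,d_{W,p}(\nu_{z_1},\nu_{z_2})^p .
\end{align*}
Working in the metric space $(\cP([0,1]^n),d_{W,p})$, joint (sequential) continuity at an arbitrary point $(C,z)$ then follows once the two right-hand terms are shown to vanish along $(C_k,z_k)\to(C,z)$. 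The first term is immediate, since $\mu_{C_k}\to\mu_C$ in $d_{W,p}$ is exactly the assumption $C_k\to C$.

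The remaining work is to show $d_{W,p}(\nu_{z_k},\nu_z)\to0$ as $z_k\to z$. In the empirical case this reduces to $d_{W,p}(\delta_{F_*\circ z_k},\delta_{F_*\circ z})=|F_*\circ z_k-F_*\circ z|$, which tends to $0$ by continuity of each marginal $F^{(i)}_*$. In the perturbed case I would couple $\nu_{z_k}$ and $\nu_z$ through the common randomness $W\sim H_{t+1}$, obtaining
\begin{align*}
d_{W,p}(\nu_{z_k},\nu_z)^p\leq\bE\big|F_*\circ(z_k+W)-F_*\circ(z+W)\big|^p,
\end{align*}
and then pass to the limit: the integrand converges to $0$ pointwise in $W$ by continuity of $F_*$ and is uniformly bounded (all values lie in $[0,1]^n$), so dominated convergence applies. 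Combining these estimates with the mixture bound gives the claim.

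Since the argument is a chain of standard coupling estimates, I expect no deep obstacle; the one point requiring care is the perturbed case, where I must first verify that $u\mapsto H_{t+1}(F^{-1}_*\circ u-z)$ is genuinely the CDF of the law of $F_*\circ(z+W)$, so that the common-$W$ coupling is admissible, and only then invoke dominated convergence (here the dominating function is trivial because $[0,1]^n$ is bounded). I would also flag that the stated domain $\cP([0,1]^n)\times\bR$ should read $\cP([0,1]^n)\times\bR^n$, consistent with $Z_{t+1}\in\bR^n$; the argument is insensitive to this.
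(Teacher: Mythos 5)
Your proof is correct, and while it follows the same skeleton as the paper's argument, the Wasserstein estimates are carried out on the primal (coupling) side rather than the dual side, which buys you a cleaner and slightly more robust argument. The paper also splits $R(t,\cdot,\cdot)$ into the old-sample part and the new-observation part, also identifies $H_{t+1}(F_*^{-1}\circ u - z)$ as the CDF of the law of $F_*\circ(\widetilde H + z)$, and also finishes with the common-randomness bound $\bE\bigl\|F_*\circ(\widetilde H + z_i)-F_*\circ(\widetilde H+z)\bigr\|$ plus dominated convergence; but it reaches these bounds by first reducing $d_{W,p}$ to $d_{W,1}$ on the compact space $[0,1]^n$ and then invoking Kantorovich--Rubinstein duality (suprema over $1$-Lipschitz test functions). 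You instead use convexity of $d_{W,p}^p$ under common-weight mixtures of couplings and an explicit synchronous coupling, which (i) works for every $p\geq 1$ directly, sidestepping the paper's comparison inequality $d_{W,p}\leq b_p d_{W,1}$ (which, as literally stated, is imprecise --- on a bounded space the correct form is $d_{W,p}^p\leq \mathrm{diam}^{p-1}\,d_{W,1}$, though either version suffices for continuity), and (ii) treats the regular empirical case ($\nu_z=\delta_{F_*\circ z}$) and the perturbed case in one unified argument, whereas the paper proves only the perturbed case in this lemma and defers the empirical case to Lemma~\ref{lemma:r-cont2} and the literature. Your flag that the domain should read $\cP([0,1]^n)\times\bR^n$ rather than $\cP([0,1]^n)\times\bR$ is also correct: the paper's own proof manipulates components $z^{(i)}$, so the statement contains a typo and your reading is the intended one. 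The only cosmetic slips are writing $\abs{\cdot}$ where the Euclidean norm $\norm{\cdot}$ on $[0,1]^n$ is meant in the two displayed distance bounds.
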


\begin{proof}
We will prove the result for $H_t$, $t\in\cT'$, being continuous distribution functions.
Regarding the case of $H_t\equiv\1_{[0,\infty)}$, we refer the readers to \cite{CM2020} or Lemma~\ref{lemma:r-cont2} below.

Assume that $(C_i,z_i)\to(C,z)$, where $C_i, C\in\cP([0,1]^n)$, $z_i, z\in\bR$, $i=1,2,\ldots$.
Then, $d_{W,p}(C_i,C)\to0$ and $z_i\to z$.
Denote $\mu_{C_i,z_i}=R(t,C_i,z_i)$ and $\mu_{C,z}=R(t,C,z)$, and $\cM:=\{f:|f(z)-f(z')|\leq\|z-z'\|,\ z, z'\in[0,1]^n\}$.
Note that $[0,1]^n$ is compact, so for any $p\geq1$ there exists a constant $b_p$ such that $d_{W,p}(F,G)\leq b_pd_{W,1}(F,G)$ for any $F, G\in\cP([0,1]^n)$.
Hence, we have that
\begin{align*}
&d_{W,p}(\mu_{C_i,z_i},\mu_{C,z})\leq b_pd_{W,1}(\mu_{C_i,z_i},\mu_{C,z})=b_p\sup\left\{\int_{[0,1]^n}fd\mu_{C_i,z_i}-\int_{[0,1]^n}fd\mu_{C,z}:
 f\in\cM.\right\}
\end{align*}
By the construction of the function $R$, we obtain
\begin{align*}
\sup\Bigg\{\int_{[0,1]^n}&fd(\mu_{C_i,z_i}-\mu_{C,z}):
 f\in\cM.\Bigg\}=\sup\Bigg\{\frac{t_0+t}{t_0+t+1}\int_{[0,1]^n}fd(C_i-C)\\
 &+\frac{\int_{[0,1]^n}fd(H_{t+1}(F^{-1}_*\circ u-z_i)-H_{t+1}(F^{-1}_*\circ u-z))}{t+t_0+1}:\ f\in\cM\Bigg\}.
\end{align*}
Note that
\begin{align*}
H_{t+1}(F^{-1}_*\circ u-z_i)=&\bP(\widetilde{H}\leq F^{-1}_*\circ u-z_i)\\
=&\bP\left(\left(F_*^{(1)}(\widetilde{H}^{(1)}+z_i^{(1)}),\ldots,F_*^{(n)}(\widetilde{H}^{(n)}+z_i^{(n)}\right)\leq u\right),
\end{align*}
where $\widetilde{H}$ is a multi-dimensional random variable with distribution $H_{t+1}$.
Then, we get that
\begin{align*}
&\sup\left\{\int_{[0,1]^n}fd(H_{t+1}(F^{-1}_*\circ u-z_i)-H_{t+1}(F^{-1}_*\circ u-z)):\ f\in\cM\right\}\\
=&\sup\Big\{\bE\left[f\left(F_*^{(1)}(\widetilde{H}^{(1)}+z^{(1)}_i),\ldots,F_*^{(n)}(\widetilde{H}^{(n)}+z^{(n)}_i\right)\right]\\
&\quad\quad-\bE\left[f\left(F_*^{(1)}(\widetilde{H}^{(1)}+z^{(1)}),\ldots,F_*^{(n)}(\widetilde{H}^{(n)}+z^{(n)}\right)\right]:f\in\cM\Big\}\\
\leq&\bE\Big[\Big\|\left(F_*^{(1)}(\widetilde{H}^{(1)}+z^{(1)}_i),\ldots,F_*^{(n)}(\widetilde{H}^{(n)}+z^{(n)}_i)\right)-\left(F_*^{(1)}(\widetilde{H}^{(1)}+z^{(1)}),\ldots,F_*^{(n)}(\widetilde{H}^{(n)}+z^{(n)})\right)\Big\|\Big].
\end{align*}
Since $F^{j}_*$, $j\in\cN$, are all bounded continuous functions, thus by dominated convergence theorem,
$$
\sup\left\{\frac{\int_{[0,1]^n}fd(H_{t+1}(F^{-1}_*\circ u-z_i)-H_{t+1}(F^{-1}_*\circ u-z))}{t+t_0+1}:\ f\in\cM\right\}\to0
$$
as $z_i\to z$.
On the other hand,
\begin{align*}
\sup\Bigg\{\frac{t_0+t}{t_0+t+1}\int_{[0,1]^n}fd(C_i-C):f\in\cM\Bigg\}=\frac{t_0+t}{t_0+t+1}d_{W,1}(C_i,C).
\end{align*}
In view that $d_{W,1}(C_i,C)\leq d_{W,p}(C_i,C)$ for $p\geq1$, we have that $d_{W,p}(C_i,C)\to0$ implies that the above term converges to 0 as well.
Therefore, $d_{W,p}(\mu_{C_i,z_i},\mu_{C,z})\to0$ and the mapping $R(t,\cdot,\cdot)$ is continuous.
\end{proof}

Another important property that $\cC^\alpha_t$ satisfies as a set valued function is upper hemi-continuity (u.h.c.) and it is stated below.

\begin{lemma}\label{lemma:uhc}
For every $t\in\cT'$, the set valued function $\cC^\alpha_t$ is upper hemi-continuous.
\end{lemma}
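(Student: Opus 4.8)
The plan is to exploit the fact that $\cC^\alpha_t$ assigns to each center $\widehat{C}_t$ the closed Wasserstein ball
\[
\cC^\alpha_t(\widehat{C}_t)=\{C\in\cP([0,1]^n): d_{W,p}(\widehat{C}_t,C)\le r(\alpha,t_0,t)\},
\]
and that the ambient space is compact. Indeed, since $[0,1]^n$ is compact, $(\cP([0,1]^n),d_{W,p})$ is itself a compact metric space: $d_{W,p}$ metrizes weak convergence on $\cP([0,1]^n)$, and weak relative compactness is automatic by Prokhorov's theorem because the whole state space sits inside the compact set $[0,1]^n$. Moreover, being a metric, $d_{W,p}$ is jointly continuous, and each value $\cC^\alpha_t(\widehat{C}_t)$, as a closed ball, is a closed --- hence compact --- subset of $\cP([0,1]^n)$.

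With compact range and compact values in hand, I would verify upper hemi-continuity through its sequential (subsequence) characterization, which is equivalent to the open-neighborhood definition in this setting. Concretely, fix a center $c\in\cP([0,1]^n)$ and take any sequence $c_i\to c$ together with any selection $C_i\in\cC^\alpha_t(c_i)$, so that $d_{W,p}(c_i,C_i)\le r(\alpha,t_0,t)$ for all $i$. By compactness of $\cP([0,1]^n)$ the sequence $\{C_i\}$ admits a subsequence $C_{i_k}$ converging to some $C\in\cP([0,1]^n)$. Passing to the limit and using the joint continuity of the metric,
\[
d_{W,p}(c,C)=\lim_{k\to\infty}d_{W,p}(c_{i_k},C_{i_k})\le r(\alpha,t_0,t),
\]
so $C\in\cC^\alpha_t(c)$. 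This is exactly the statement that every selection along $c_i\to c$ has a subsequence whose limit lies in $\cC^\alpha_t(c)$, which for a compact-valued correspondence into a compact space is upper hemi-continuity.

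Equivalently, I could phrase the same computation as saying that the graph $\{(c,C): d_{W,p}(c,C)\le r(\alpha,t_0,t)\}$ is closed --- again an immediate consequence of the continuity of $d_{W,p}$ --- and then invoke the closed-graph criterion: a correspondence with closed graph taking values in a compact space is upper hemi-continuous. The radius $r(\alpha,t_0,t)$ plays no role beyond being a fixed nonnegative constant for fixed $t$, so the argument is uniform in $t\in\cT'$.

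I expect no genuine obstacle in the limiting argument itself; the only points requiring care are structural prerequisites rather than computation. The first is confirming that $(\cP([0,1]^n),d_{W,p})$ is compact, which must be invoked correctly --- the compactness of $[0,1]^n$ is what makes this work and is also what guarantees that $d_{W,p}$ topologizes weak convergence rather than a strictly stronger topology. The second is making sure the chosen notion of upper hemi-continuity matches the one needed later for the measurable selection results: since the values are compact, the subsequence characterization and the closed-graph characterization coincide, so either route suffices.
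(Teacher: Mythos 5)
Your proof is correct, but it takes a genuinely different route from the paper. The paper verifies the open-neighborhood definition directly by a metric argument: given an open $E\supset\cC^\alpha_t(C)$, it sets $\varepsilon=\mathrm{dist}(C,\partial E)-r(\alpha,t_0,t)$, claims $\varepsilon>0$, and takes the Wasserstein ball of radius $\varepsilon$ around $C$ as the required neighborhood, concluding via the triangle inequality. You instead invoke compactness of $(\cP([0,1]^n),d_{W,p})$ (Prokhorov plus the fact that $d_{W,p}$ metrizes weak convergence over a compact base space) and then apply the closed-graph criterion, or equivalently the sequential characterization for compact-valued correspondences: any selection $C_i\in\cC^\alpha_t(c_i)$ along $c_i\to c$ has a subsequential limit $C$, and joint continuity of the metric forces $d_{W,p}(c,C)\le r(\alpha,t_0,t)$. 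Your route is arguably the more robust one: the paper's claim that $\varepsilon>0$ is not automatic in a general metric space (the infimum of distances to $\partial E$, each strictly exceeding $r$, could still equal $r$), and justifying it rigorously requires exactly the compactness you make explicit, as does the implicit step that points within distance $\mathrm{dist}(C,\partial E)$ of $C$ lie in $E$; your closed-graph argument sidesteps both subtleties. The paper's approach, in exchange, is more elementary --- pure triangle-inequality reasoning with no appeal to Prokhorov or to the closed-graph theorem for correspondences --- and your observation that the values are compact (not merely closed) is a useful by-product, since it is what the paper later needs anyway when applying the measurable-selection machinery of Bertsekas--Shreve in Proposition~\ref{prop:usc}.
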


\begin{proof}
We need to show that for any $C\in\cP([0,1]^n)$ and any open set $E$ such that $\cC^\alpha_t(C)\subset E\subset\cP([0,1]^n)$, there exists a neighborhood $D$ of $C$ such that for all $C'\in D$, $\cC^\alpha_t(C')\subset E$.

To this end, let $\varepsilon=\text{dist}(C,\partial E)-r(\alpha,t_0,t)$ where $\text{dist}(C,\partial E)$ is the shortest Wasserstein-$p$ distance from $C$ to the boundary of $E$.
Since $E$ is an open set, then $\varepsilon>0$.
Hence, we take $D$ as the Wasserstein-$p$ ball centered at $C$ with radius $\varepsilon$.
It is clear that for any $C'\in D$, we have $\cC^\alpha_t(C')\subset E$.
Therefore, for fixed $t\in\cT'$, $\cC^\alpha_t$ is u.h.c..
\end{proof}

\section{Nonparametric Adaptive Robust Control Problem}\label{sec:adaptive-robust}

Now we proceed to formulate the nonparametric adaptive robust control problem assuming the true copula function $C_*$ is unknown.
Recall that we consider $\cP([0,1]^n)$ with the metric $d_{W,p}$.
Obviously, $(\cP([0,1]^n,d_{W,p})$ is separable and complete.
Hence, it is a Polish space and thus a Borel space.
We define the augmented state process $Y=\{Y_t=(X_t,\widehat{C}_t), t\in\cT\}$, and such process has the following dynamics
\begin{align}\label{eq:y-dynamics}
Y_{t+1}=\mathbf{G}(t,X_t,\varphi_t,Z_{t+1}):=(S(t,X_t,\varphi_t,Z_{t+1}),R(t,\widehat{C}_t,Z_{t+1})), \quad t\in\cT'.
\end{align}
According to the assumption that $S$ is continuous and Lemma~\ref{lemma:r-cont}, we get that $\mathbf{G}$ is continuous and therefore Borel measurable.

\subsection{Transition Kernel of the State Process}

Through the rest of this paper, we will write $\cC^\alpha_t(Y_t):=\cC^\alpha_t(\widehat{C}_t)$ for $Y_t=(X_t,\widehat{C}_t)$.
For $C\in\cP([0,1]^n)$, one would like to define the transition kernel of $Y$ as
\begin{align*}
Q_t(dy'|y,a,F)=\bP_{C\circ F^*}(\mathbf{G}(t,y,a,Z_{t+1})\in dy').
\end{align*}
However, $C\circ F^*$ is a well-defined distribution on $\bR^n$ only when $C^{(i)}$, $i\in\cN$, is left continuous at 1 and
$$
\lim_{u\to0}C^{(i)}(u)=0.
$$
Distribution $C\in\cP([0,1]^n)$ does not satisfies the above conditions will result in a distribution $C\circ F^*$ putting mass on infinity.
To overcome this problem, we consider $\overline{\bR}=\bR\bigcup\{-\infty,\infty\}$ with Borel $\sigma$-algebra $\cB(\overline{\bR})=\sigma\left(\cB(\bR)\bigcup\{[-\infty,x),(x,\infty],x\in\bR\}\right)$.
We also consider a metric on $\overline{\bR}$ consistent with $\cB(\overline{\bR})$.
Consequently, we have $\overline{\bR}^n$ equipped with the product topology and it is a Polish space as well as a Borel space.
In addition, we consider $\cP(\overline{\bR}^n)$ the set of all probability distributions on $\overline{\bR}^n$ with topology consistent with weak convergence.
For any $F\in\cP(\bR^n)$, we denote by $\overline{F}$ its extension in $\cP(\overline{\bR}^n)$ as
$$
\overline{F}(x)=
\begin{cases}
F(x), \quad &x\in\bR^n,\\
0, \quad &-\infty\in\{x^{(i)}, i\in\cN\},\\
1, \quad &\infty\in\{x^{(i)}, i\in\cN\}.
\end{cases}
$$
Naturally, we change the state space of $X$ to $\overline{\bR}^n$, and define the augmented state space $E_Y=\overline{\bR}^n\times\cP([0,1]^n)$ with product topology.
It is then a Borel space and the Borel $\sigma$-algebra $\cE_Y$ coincides with the product $\sigma$-algebra.
In addition, for any relevant real valued function $f$, we write
$$
f(\infty)=\lim_{x\to\infty}f(x).
$$

To proceed, for any $t\in\cT'$, $(y,a)\in E_Y\times A$, and distribution $C\in\cP([0,1]^n)$, $Q_t$ is a probability measure on $\cE_Y$ such that
\begin{align*}
Q_t(D|y,a,C)=\bP_{C\circ F_*}(\mathbf{G}(t,y,a,Z_{t+1})\in D), \quad D\in\cE_Y.
\end{align*}
In particular, when $C$ is a copula, $C\circ F_*$ is a $n$-dimensional probability distribution on $\bR^n$ with marginals $F_*^{(1)},\ldots,F_*^{(n)}$.
One important property of $Q_t$ is that it is Borel measurable for fixed $t\in\cT'$.
Such property is crucial for proving the existence of measurable optimal control and worst-case copula.
To this end, we extend Lemma~\ref{lemma:r-cont} to the general case and provide the following technical results.

\begin{lemma}\label{lemma:r-cont2}
For every fixed $t\in\cT'$, the mapping
\begin{align}\label{eq:R2}
\overline{R}(t,F,z')=\frac{(t_0+t)F(z)+\overline{H}_{t+1}(z-z')}{t_0+t+1}, \quad F\in\cP(\overline{\bR}^n),\ z, z'\in\overline{\bR}^n.
\end{align}
is continuous on $\cP(\overline{\bR}^n)\times\overline{\bR}^n$.
\end{lemma}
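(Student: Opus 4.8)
The plan is to exploit the convex-combination structure of $\overline R$ together with the fact that the topology of weak convergence on $\cP(\overline{\bR}^n)$ is, by definition, the coarsest topology making every evaluation map $\mu\mapsto\int f\,d\mu$, $f\in C(\overline{\bR}^n)$, continuous. Writing $\lambda=\frac{t_0+t}{t_0+t+1}$, the first observation is that the measure $\overline R(t,F,z')$ equals the convex combination $\lambda F+(1-\lambda)\,\mu_{z'}$, where $\mu_{z'}$ is the law whose CDF is $z\mapsto\overline H_{t+1}(z-z')$, i.e.\ the push-forward of $\overline H_{t+1}$ under the shift $w\mapsto w+z'$; thus $\mu_{z'}$ is a shifted continuous law in the perturbed case and the Dirac mass $\delta_{z'}$ in the case $\overline H_{t+1}\equiv\1_{[0,\infty)^n}$. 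Since $\overline{\bR}^n$ is compact, every continuous $f$ is bounded and $\cP(\overline{\bR}^n)$ is a metrizable (compact) Polish space, so it is enough to work with sequences.

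By the reduction above, to prove that $(F,z')\mapsto\overline R(t,F,z')$ is continuous it suffices to fix $f\in C(\overline{\bR}^n)$ and show that
\[
(F,z')\longmapsto\int_{\overline{\bR}^n}f\,d\overline R(t,F,z')
=\lambda\int_{\overline{\bR}^n}f\,dF+(1-\lambda)\,\bE\left[f\bigl(\widetilde H+z'\bigr)\right]
\]
is continuous, where $\widetilde H\sim\overline H_{t+1}$. The first summand depends only on $F$ and is continuous in $F$ by the very definition of the weak topology, while being constant in $z'$; the second summand depends only on $z'$. Hence the two contributions may be handled separately, and their weighted sum is jointly continuous as soon as each is.

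The one substantive step is therefore continuity of $z'\mapsto\bE[f(\widetilde H+z')]$. Given $z_i'\to z'$ in $\overline{\bR}^n$, I would note that coordinatewise addition on $\overline{\bR}$ is continuous off the indeterminate pairs $(+\infty,-\infty)$, and that $\overline H_{t+1}$ charges neither $\{+\infty\}$ nor $\{-\infty\}$ in any coordinate (it is either a continuous law on $\bR^n$ or a point mass at a finite point); consequently $\widetilde H+z_i'\to\widetilde H+z'$ almost surely, and continuity of $f$ gives $f(\widetilde H+z_i')\to f(\widetilde H+z')$ a.s. Because $|f|\le\norm{f}_\infty$ is $\overline H_{t+1}$-integrable, dominated convergence yields $\bE[f(\widetilde H+z_i')]\to\bE[f(\widetilde H+z')]$. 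In the indicator case this is immediate, as $\bE[f(\widetilde H+z')]=f(z')$ and $f$ is continuous. Combining the two summands proves the lemma, and the analysis covers both admissible choices of $H$.

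I expect the main obstacle to be the bookkeeping forced by the compactified space $\overline{\bR}^n$: one must check that the shift $w\mapsto w+z'$ is a well-defined Borel map into $\overline{\bR}^n$, that the $\infty-\infty$ indeterminacy is genuinely $\overline H_{t+1}$-negligible in both the continuous and point-mass regimes, and that evaluating $f$ at points with infinite coordinates is legitimate---this last point being guaranteed by $f\in C(\overline{\bR}^n)$ together with the convention $f(\infty)=\lim_{x\to\infty}f(x)$ fixed above. Apart from this, the argument is the same linearity-plus-dominated-convergence scheme already used in Lemma~\ref{lemma:r-cont}.
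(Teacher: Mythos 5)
Your proof is correct and follows essentially the same route as the paper's: both exploit the convex-combination structure $\overline{R}(t,F,z')=\lambda F+(1-\lambda)\mu_{z'}$ and treat the two summands separately, establishing weak convergence of the shifted law $\mu_{z'_i}$ from $z'_i\to z'_0$. The only cosmetic difference is that you verify weak convergence by testing against $f\in C(\overline{\bR}^n)$ with dominated convergence, while the paper invokes the Portmanteau criterion via continuity sets---equivalent characterizations of the same argument.
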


\begin{proof}
Assume that $(F_i,z'_i)\to(F,z'_0)$ and it implies that $F_i\to F$ in distribution and $z'_i\to z'_0$.
Take any continuity set $B$ of $\overline{R}(t,F_0,z'_0)$.
That is for the boundary $\partial B$ of $B$, the probability $\bP_{\overline{R}(t,F_0,z'_0)}(\partial B)$ is equal to 0.
Therefore, we get that $B$ is also a continuity set of $F$ and $\overline{H}_{t+1}(z-z'_0)$.

Now, since $F_i\to F_0$ in distribution, then by the Portmanteau lemma, we know
$$
\lim_{i\to\infty}\bP_{F_i}(B)\to\bP_{F_0}(B).
$$
On the other hand, for any $z'\in\overline{\bR}^n$, $\overline{H}_{t+1}(z-z')=\bP(\widetilde{H}\leq z-z')=\bP(\widetilde{H}+z'\leq z)$, where $\widetilde{H}$ is the random variable with distribution $\overline{H}_{t+1}$.
Since $z'_i$ converges to the constant $z'_0$, then $\widetilde{H}+z'_i\to\widetilde{H}+z'_0$ in distribution.
In turn, we get
$$
\lim_{i\to\infty}\bP_{\overline{H}_{t+1}(z-z'_i)}(B)\to\bP_{\overline{H}_{t+1}(z-z'_0)}(B).
$$
To summarize, we have that
$$
\lim_{i\to\infty}\bP_{\overline{R}(t,F_i,z'_i)}(B)=\bP_{\overline{R}(t,F_0,z'_0)}(B),
$$
and $\overline{R}(t,F,z')$ is continuous on $\cP(\overline{R}^n)\times\overline{\bR}^n$.
\end{proof}

With this result in hand, we define the transition kernel on the product space $\overline{\bR}^n\times\cP(\overline{\bR}^n)$ with product topology.
That is, fix $t\in\cT'$, for any $(x,F,a,G)\in\overline{\bR}^n\times\cP(\overline{\bR}^n)\times A\times \cP(\overline{\bR}^n)$, and any set $D$ belongs to the Borel $\sigma$-algebra of $\overline{\bR}^n\times\cP(\overline{\bR}^n)$, we write
\begin{align*}
\overline{Q}_t(D|x,F,a,G)=\bP_G((S(t,x,a,Z_{t+1}),\overline{R}(t,F,Z_{t+1}))\in D).
\end{align*}
Then, we show the measurability of $\overline{Q}_t$, $t\in\cT'$.

\begin{lemma}\label{lemma:Qbar}
For every fixed $t\in\cT'$, $\overline{Q}_t$ is a continuous stochastic kernel on $\overline{\bR}^n\times\cP(\overline{\bR}^n)$ given $\overline{\bR}^n\times\cP(\overline{\bR}^n)\times A\times \cP(\overline{\bR}^n)$.
\end{lemma}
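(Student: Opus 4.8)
The plan is to verify the two defining properties of a continuous stochastic kernel: that $\overline{Q}_t(\cdot\mid x,F,a,G)$ is a Borel probability measure on $\overline{\bR}^n\times\cP(\overline{\bR}^n)$ for each fixed parameter, and that the assignment $(x,F,a,G)\mapsto \overline{Q}_t(\cdot\mid x,F,a,G)$ is continuous when $\cP(\overline{\bR}^n\times\cP(\overline{\bR}^n))$ carries the topology of weak convergence. The first property is immediate, since $\overline{Q}_t(\cdot\mid x,F,a,G)$ is by definition the law of the random element $(S(t,x,a,Z_{t+1}),\overline{R}(t,F,Z_{t+1}))$ under $Z_{t+1}\sim G$. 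By the portmanteau theorem, the second property is equivalent to showing that for every bounded continuous $g:\overline{\bR}^n\times\cP(\overline{\bR}^n)\to\bR$, the map
\[
(x,F,a,G)\mapsto \int g\,d\overline{Q}_t(\cdot\mid x,F,a,G)=\bE_G\!\left[g\big(S(t,x,a,Z_{t+1}),\overline{R}(t,F,Z_{t+1})\big)\right]
\]
is continuous.

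To this end I would introduce $\Phi(x,F,a,z):=(S(t,x,a,z),\overline{R}(t,F,z))$. Since $S(t,\cdot,\cdot,\cdot)$ is continuous by assumption and $\overline{R}(t,\cdot,\cdot)$ is continuous by Lemma~\ref{lemma:r-cont2}, the map $\Phi$ is continuous on $\overline{\bR}^n\times\cP(\overline{\bR}^n)\times A\times\overline{\bR}^n$, so $g\circ\Phi$ is bounded and continuous there. The quantity to control is thus $\bE_G[(g\circ\Phi)(x,F,a,Z_{t+1})]$, in which both the integrating measure $G$ and the integrand $(g\circ\Phi)(x,F,a,\cdot)$ move with the parameters simultaneously; this coupling is the crux of the argument, because ordinary weak convergence only handles a \emph{fixed} integrand against a varying measure.

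The observation that breaks the coupling is compactness. The space $\overline{\bR}^n$ is compact, so by Prokhorov's theorem $\cP(\overline{\bR}^n)$ is weakly compact, and $A$ is compact by hypothesis; hence the full domain $\overline{\bR}^n\times\cP(\overline{\bR}^n)\times A\times\overline{\bR}^n$ is a compact metric space, on which the continuous function $g\circ\Phi$ is uniformly continuous. Now take $(x_i,F_i,a_i,G_i)\to(x_0,F_0,a_0,G_0)$ and split
\begin{align*}
&\bE_{G_i}[(g\circ\Phi)(x_i,F_i,a_i,Z)]-\bE_{G_0}[(g\circ\Phi)(x_0,F_0,a_0,Z)]\\
&\quad=\bE_{G_i}\big[(g\circ\Phi)(x_i,F_i,a_i,Z)-(g\circ\Phi)(x_0,F_0,a_0,Z)\big]\\
&\quad\quad+\Big(\bE_{G_i}[(g\circ\Phi)(x_0,F_0,a_0,Z)]-\bE_{G_0}[(g\circ\Phi)(x_0,F_0,a_0,Z)]\Big).
\end{align*}
The last bracket tends to $0$ because $(g\circ\Phi)(x_0,F_0,a_0,\cdot)$ is a fixed bounded continuous function on $\overline{\bR}^n$ and $G_i\to G_0$ weakly. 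For the first bracket, uniform continuity of $g\circ\Phi$ gives, for any $\varepsilon>0$ and all sufficiently large $i$, the uniform-in-$z$ estimate $|(g\circ\Phi)(x_i,F_i,a_i,z)-(g\circ\Phi)(x_0,F_0,a_0,z)|<\varepsilon$, since $(x_i,F_i,a_i)\to(x_0,F_0,a_0)$ while the last coordinate is held equal; integrating against the probability measure $G_i$ keeps the bound below $\varepsilon$.

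Hence both terms vanish, the map is continuous for every bounded continuous $g$, and $\overline{Q}_t$ is a continuous stochastic kernel. I expect the main obstacle to be precisely the simultaneous variation of the integrating measure and the integrand identified above; its resolution rests entirely on the compactness of $\overline{\bR}^n$ and of $\cP(\overline{\bR}^n)$, which upgrades continuity of $g\circ\Phi$ to uniform continuity. This is, in fact, exactly the reason the construction passed to the two-point compactification $\overline{\bR}$ and to measures on $\overline{\bR}^n$ in the first place.
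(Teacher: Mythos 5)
Your proof is correct, and its skeleton coincides with the paper's: both reduce the claim to showing that $(x,F,a,G)\mapsto\int f\,d\overline{Q}_t(\,\cdot\,|x,F,a,G)$ is continuous for every bounded continuous $f$, both rewrite this integral as $\int_{\overline{\bR}^n} f\big(S(t,x,a,z),\overline{R}(t,F,z)\big)\,dG(z)$, and both feed in the assumed continuity of $S$ together with Lemma~\ref{lemma:r-cont2}. Where you diverge is at the crux you correctly single out, namely the simultaneous variation of the integrand and the integrating measure. The paper disposes of this in one line: it views $G$ as a continuous stochastic kernel that simply does not depend on $(x,F,a)$ and cites \cite[Proposition 7.30]{BS1978}, which asserts precisely that $p\mapsto\int f(p,z)\,q(dz|p)$ is continuous for bounded continuous $f$ and continuous kernels $q$. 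You instead prove that step by hand: compactness of $\overline{\bR}^n$, of $\cP(\overline{\bR}^n)$ (via Prokhorov), and of $A$ makes $f\circ\Phi$ uniformly continuous on the product, and your two-term splitting then isolates one term killed by uniform continuity (uniformly in $z$, hence after integrating against $G_i$) and one term killed by plain weak convergence against a fixed test function. This is a valid and fully self-contained resolution; what it buys is independence from the reference, and what it costs is reliance on compactness of all the spaces involved, which happens to be available here by construction. The paper's citation route is shorter and would survive in settings where the parameter and noise spaces are merely separable metrizable, since Proposition 7.30 of \cite{BS1978} requires no compactness. Your closing remark should be tempered slightly: the paper's stated reason for passing to $\overline{\bR}^n$ is to make $C\circ F_*$ a genuine distribution when $C$ is not a copula (avoiding mass escaping to infinity), and the compactness you exploit is a byproduct of that construction rather than its purpose.
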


\begin{proof}
It is enough to prove that for any bounded continuous function $f$ on $\overline{\bR}^n\times\cP(\overline{\bR}^n)$ we have
\begin{align}\label{eq:bounded_cont}
\int_{\overline{\bR}^n\times\cP(\overline{\bR}^n)}f(y)\overline{Q}_t(dy|x,F,a,G)
\end{align}
is continuous w.r.t. $(x,F,a,G)$.
To this end, we view that
\begin{align*}
\int_{\overline{\bR}^n\times\cP(\overline{\bR}^n)}f(y)\overline{Q}_t(dy|x,F,a,G)=\int_{\overline{\bR}^n}f((S(t,x,a,z),\overline{R}(t,F,z))dG(z).
\end{align*}
By assumption on $S$ and Lemma~\ref{lemma:r-cont2}, for fixed $t\in\cT'$, $S$ and $\overline{R}$ are continuous w.r.t. other variables.
Since $f$ is also continuous, then $f((S(t,x,a,z),\overline{R}(t,F,z))$ is continuous w.r.t. $(x,F,a,z)$.
In addition, $G(z)$ can be seen as a continuous stochastic kernel on $\overline{\bR}^n$ given $(x,F,a,G)$ because it does not depend on $(x,F,a)$.
Hence, by \cite[Proposition 7.30]{BS1978}, we get that
$$
\int_{\overline{\bR}^n}f((S(t,x,a,z),\overline{R}(t,F,z))dG(z)
$$
is continuous w.r.t. $(x,F,a,G)$.
We immediately conclude that \eqref{eq:bounded_cont} is continuous w.r.t. $(x,F,a,G)$.
\end{proof}

Next, we define the following mappings between $\cP([0,1]^n)$ and $\cP(\overline{\bR}^n)$ similarly to \eqref{eq:Chat}: for any $F\in\cP(\overline{\bR}^n)$,
\begin{align}\label{eq:Cbar1}
C(u):=F(F^{-1}_*\circ u), \quad u\in[0,1]^n,
\end{align}
and for any $C\in\cP([0,1]^n)$,
\begin{align}\label{eq:Cbar2}
F(x):=C(F_*\circ x), \quad x\in\overline{\bR}^n.
\end{align}
We realize that due to our assumptions on the marginals $F^{(i)}_*$, $i\in\cN$, the mapping
$$
u\to F_*\circ u, \quad u\in[0,1]^n,
$$
defines a homeomorphism between $[0,1]^n$ and $\overline{\bR}^n$.
By using such observation, we show the main result of this section as follows.

\begin{proposition}\label{prop:bmsk}
For each $t\in\cT'$, the probability $Q_t(\ \cdot\ |y,a,C)$ is a continuous stochastic kernel on $E_Y$ given $E_Y\times A\times \cP([0,1]^n)$.
Moreover, it is a Borel measurable stochastic kernel on $E_Y$ given $E_Y\times A\times \cP([0,1]^n)$.
\end{proposition}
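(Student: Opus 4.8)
The plan is to reduce the claim to Lemma~\ref{lemma:Qbar}, which already furnishes continuity (hence Borel measurability) of the companion kernel $\overline{Q}_t$ living on $\overline{\bR}^n\times\cP(\overline{\bR}^n)$. The bridge is the pair of maps in \eqref{eq:Cbar1} and \eqref{eq:Cbar2}. Write $\Psi(F):=F(F^{-1}_*\circ\,\cdot\,)$ for $F\in\cP(\overline{\bR}^n)$ and $\Phi(C):=C(F_*\circ\,\cdot\,)$ for $C\in\cP([0,1]^n)$. First I would record that $\Phi$ and $\Psi$ are mutually inverse, which follows directly from $F_*\circ(F^{-1}_*\circ u)=u$ and $F^{-1}_*\circ(F_*\circ x)=x$. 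Moreover, since $F_*$ induces a homeomorphism between $[0,1]^n$ and $\overline{\bR}^n$ (as noted just before this Proposition), $\Phi$ is exactly the pushforward of measures along that homeomorphism; consequently $\Phi:\cP([0,1]^n)\to\cP(\overline{\bR}^n)$ is a homeomorphism for the weak topologies, with continuous inverse $\Psi$.

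Next I would identify $Q_t$ as the transport of $\overline{Q}_t$ along $\Phi$. Two observations carry the argument. On one hand, by the convention $f\circ F$ the measure $C\circ F_*$ driving $Z_{t+1}$ has CDF $x\mapsto C(F_*\circ x)$, i.e.\ $C\circ F_*=\Phi(C)$, so the law of $Z_{t+1}$ under $\bP_{C\circ F_*}$ is precisely the argument $G=\Phi(C)$ appearing in $\overline{Q}_t$. On the other hand, one checks the intertwining $\Phi\big(R(t,\widehat C,z)\big)=\overline{R}(t,\Phi(\widehat C),z)$ for every $z$, by evaluating both sides at an arbitrary $x\in\overline{\bR}^n$ and using $\1_{[0,u_i]}(F^{(i)}_*(z^{(i)}))=\1_{\{z^{(i)}\le F^{(i),-1}_*(u_i)\}}$ in the empirical case, together with the analogous continuous statement for $\overline{H}_{t+1}$ in the perturbed case. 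Combining these, the map $\mathrm{id}\times\Phi:E_Y\to\overline{\bR}^n\times\cP(\overline{\bR}^n)$ sends $\mathbf{G}(t,(x,\widehat C),a,z)$ to $(S(t,x,a,z),\overline{R}(t,\Phi(\widehat C),z))$, whence
\begin{equation*}
Q_t\big(D\mid (x,\widehat C),a,C\big)=\overline{Q}_t\big((\mathrm{id}\times\Phi)(D)\,\big|\,x,\Phi(\widehat C),a,\Phi(C)\big),\qquad D\in\cE_Y.
\end{equation*}

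With this identity in place, continuity transfers cleanly. For bounded continuous $g$ on $E_Y$, the change of variables gives $\int g\,dQ_t(\cdot\mid(x,\widehat C),a,C)=\int (g\circ(\mathrm{id}\times\Psi))\,d\overline{Q}_t(\cdot\mid x,\Phi(\widehat C),a,\Phi(C))$; since $\mathrm{id}\times\Psi$ is continuous, $g\circ(\mathrm{id}\times\Psi)$ is bounded continuous, so Lemma~\ref{lemma:Qbar} makes the right-hand side continuous in $(x,\Phi(\widehat C),a,\Phi(C))$, and precomposing with the continuous map $((x,\widehat C),a,C)\mapsto(x,\Phi(\widehat C),a,\Phi(C))$ yields continuity in $(y,a,C)$. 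Thus $Q_t$ is a continuous stochastic kernel. The final, Borel-measurability assertion is then automatic: a weakly continuous kernel into the Polish (Borel) space $\cP(E_Y)$ is Borel measurable, cf.\ \cite{BS1978}.

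The main obstacle I anticipate is the verification of the intertwining $\Phi(R)=\overline{R}\circ\Phi$, in particular making sure the indicator and boundary terms match on the two ``infinite'' faces of $\overline{\bR}^n$ and that the computation is uniform across the two admissible choices of $H_{t+1}$ (empirical versus perturbed). Everything else is a formal transport of Lemma~\ref{lemma:Qbar} through the homeomorphism $\Phi$, but this identity is where the specific structure of $R$ in \eqref{eq:func_r} and of $\overline{R}$ in \eqref{eq:R2} must genuinely be used.
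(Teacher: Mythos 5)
Your proposal is correct and follows essentially the same route as the paper: both transport the problem to $\overline{\bR}^n\times\cP(\overline{\bR}^n)$ through the maps \eqref{eq:Cbar1}--\eqref{eq:Cbar2} induced by the homeomorphism $u\mapsto F_*^{-1}\circ u$ (your $\Phi,\Psi$ are the paper's $M_1,M_2$) and then invoke Lemma~\ref{lemma:Qbar}, with Borel measurability following from continuity. If anything, your write-up is tighter on one point: you explicitly verify the intertwining $\Phi\big(R(t,\widehat{C},z)\big)=\overline{R}(t,\Phi(\widehat{C}),z)$ that justifies the kernel identity $Q_t=M_2\circ\overline{Q}_t\circ M_1$, which the paper asserts without detailed verification (its step 3 instead re-derives the continuity of $Q_t$ directly from Lemma~\ref{lemma:r-cont}, the equivalence of weak and Wasserstein convergence on $\cP([0,1]^n)$, and \cite[Proposition 7.30]{BS1978}).
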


\begin{proof}
Through the proof, all the convergence for probability distributions are understood in the weak sense.
We will prove the statements in several steps.
\begin{enumerate}
\item
Establish a mapping $M_1$ from $E_Y\times A\times\cP([0,1]^n)$ to $\overline{\bR}^n\times\cP(\overline{\bR}^n)\times A\times\cP(\overline{\bR}^n)$ that is continuous.
\item
Use Lemma~\ref{lemma:Qbar} to obtain the continuous mapping $\overline{Q}_t$ from $\overline{\bR}^n\times\cP(\overline{\bR}^n)\times A\times\cP(\overline{\bR}^n)$ to $\cP(\overline{\bR}^n\times\cP(\overline{\bR}^n))$.
\item
Construct a mapping $M_2$ from $\cP(\overline{\bR}^n\times\cP(\overline{\bR}^n))$ to $\cP(E_Y)$, which is restricted to the family of $\overline{Q}_t(\ \cdot\ |x,F,a,G)$, and satisfies some proper form of continuity.
\end{enumerate}
We also need $M_1$ and $M_2$ to satisfy $Q_t=M_2\circ\overline{Q}_t\circ M_1$.
To this end, for the first step, we take $M_1: (x,\widetilde{C},a,C)\mapsto (x,\widetilde{C}\circ F_*,a,C\circ F_*)$, $F, G\in\cP([0,1]^n)$.
Assume that the sequences $\{\widetilde{C}_i\in\cP([0,1]^n),i=1,2,\ldots\}$ and $\{C_i\in\cP([0,1]^n),i=1,2,\ldots\}$ converge to $\widetilde{C}_0\in\cP([0,1]^n)$ and $C_0\in\cP([0,1]^n)$ in distribution, respectively.
It implies that for any continuity point $u$ of $\widetilde{C}_0$, we have $\widetilde{C}_i(u)\to \widetilde{C}_0(u)$ as $i\to\infty$.
In view of the homeomorphism
$$
u\to F_*^{-1}\circ u, \quad u\in[0,1]^n,
$$
if $z$ is a continuity point of $\widetilde{C}_0\circ F_*$, then $\left(F^{(1)}_*(z^{(1)}),\ldots,F^{(n)}_*(z^{(n)})\right)$ is a continuity point of $\widetilde{C}_0$.
Recall that $\widetilde{C}_i$ converges to $\widetilde{C}_0$ in distribution, we get
\begin{align*}
\lim_{i\to\infty}\widetilde{C}_i\circ F_*(z)&=\lim_{i\to\infty}\widetilde{C}_i(F_*\circ z)=\lim_{i\to\infty}\widetilde{C}_i\left(F^{(1)}_*(z^{(1)}),\ldots,F^{(n)}_*(z^{(n)})\right)\\
&=\widetilde{C}_0\left(F^{(1)}_*(z^{(1)}),\ldots,F^{(n)}_*(z^{(n)})\right)=\widetilde{C}_0\circ F_*(z),
\end{align*}
which implies that $\widetilde{C}_i\circ F_*$ converges to $\widetilde{C}_0\circ F_*$ in distribution.
Similarly, $C_i\circ F_*$ converges to $C_0\circ F_*$ in distribution, and step 1 is done.
Also note that step 2 is fulfilled according to Lemma~\ref{lemma:Qbar}.

Next, define $M_2: \overline{Q}_t(\ \cdot\ |x,F,a,G)\mapsto Q_t(\ \cdot\ |x,F\circ F^{-1}_*,a,G\circ F^{-1}_*)$, $F, G\in\cP(\overline{\bR}^n)$.
We will show that $Q_t$ is continuous w.r.t. $(x,F,a,G)$.
In turn, the mapping $M_2$ will be continuous according to the definition of continuous parametric mapping.
Let $D\in\cE_Y$ be a closed set, and let $(F_i,G_i)$ converges to $(F_0,G_0)$.
Similarly to the discussion in step 1, we have $F_i\circ F^{-1}_*$ and $G_i\circ F^{-1}_*$ converge to $F_0\circ F^{-1}_*$ and $G_0\circ F^{-1}_*$, respectively.
Thus, by Lemma~\ref{lemma:r-cont} and the fact that continuity in distribution and continuity in the Wasserstein sense are equivalent on $\cP([0,1]^n)$, the function $\mathbf{G}(t,x,F\circ F^{-1}_*,a,z)$ is continuous w.r.t. $(x,F,a,z)$.
Consider any bounded continuous function $f$ on $E_Y$, we get that
\begin{align}\label{eq:exp2form}
\int_{E_Y}f(y)Q_t(dy|x,F\circ F^{-1}_*,a,G\circ F^{-1}_*)=\int_{\overline{\bR}^n}f(\mathbf{G}(t,x,F\circ F^{-1}_*,a,z))d G(F^{-1}_*\circ z).
\end{align}
The integrand $f(\mathbf{G}(t,x,F\circ F^{-1}_*,a,z))$ is continuous w.r.t. $(x,F,a,z)$.
Also, we view the $G\circ F^{-1}_*$ as a continuous stochastic kernel w.r.t. $(x,F,a,G)$.
By \cite[Proposition 7.30]{BS1978} again, the integral
$$
\int_{\overline{\bR}^n}f(\mathbf{G}(t,x,F\circ F^{-1}_*,a,z))dG(F^{-1}_*\circ z)
$$
is continuous in $(x,F,a,G)$, and so is
$$
\int_{E_Y}f(y)Q_t(dy|x,F\circ F^{-1}_*,a,G\circ F^{-1}_*)
$$
due to \eqref{eq:exp2form}.
As a result, $Q_t$ is continuous in $(x,F,a,G)$, and step 3 is complete.

Finally, we note that $Q_t=M_2\circ\overline{Q}_t\circ M_1$.
According to the above discussion and equivalence between continuity in distribution and in the Wasserstein sense on $\cP([0,1]^n)$, $Q_t$ is a continuous stochastic kernel.
Hence, it is also a Borel measurable stochastic kernel.
\end{proof}

\subsection{Formulation of Adaptive Robust Control Problem}

In this work, we will formulate and solve a closed loop feedback control problem.
To this end, with slight abuse of notations, we say that a control process $\varphi$ is Markovian if for every $t\in\cT'$
\begin{align*}
\varphi_t=\varphi_t(Y(t)),
\end{align*} 
such that $\varphi_t(\cdot):E_Y\to A$ is a measurable mapping.
Similarly, a process $\psi$ is called a Markovian model selector if
\begin{align*}
\psi_t=\psi_t(Y(t)),
\end{align*}
where $\psi_t(\cdot):E_Y\to\cP([0,1]^n)$ is measurable.
In the adaptive robust framework, we consider the Markovian control processes and Markovian model selectors such that $\psi_t(y)\in\cC_t^{\alpha}(y)$ and $\psi_t(y)$ is restricted to be a copula for any $y\in E_Y$.
For every $t\in\cT'$, any time-$t$ state $y_t\in E_Y$, and control process $\varphi\in\cA_t$, we define
\begin{align*}
\mathbf{\Psi}^{y_t}_{t}=\left\{\psi:\psi_s(y_s)\in\cC^\alpha_s(y_s), \ t\leq s\leq T-1\right\}.
\end{align*}
In addition, let $\varphi\in\cA_t$, and $\delta_y$ be the Dirac probability measure assigns the mass at $y\in E_Y$.
Then, at time $t$, for the current state $Y_t=y$, define the probability measure $\bQ^{\varphi,\psi}_{y,t}$ on the concatenated canonical space $\textsf{X}_{s=t}^T E_Y$ as
\begin{align*}
\bQ^{\varphi,\psi}_{y,t}(B_{t}\times\cdots\times B_T)=\int_{B_{t}}\int_{B_{t+1}}\cdots\int_{B_T}\prod_{s=t}^{T-1}Q_s(dy_{s+1}|y_s,\varphi(s,y_s),\psi(s,y_s))\delta_{y}(dy_t).
\end{align*}
Accordingly, we consider the family of probability measures $\cQ^{\varphi}_{y,t}=\left\{\bQ^{\varphi,\psi}_{y,t},\ \psi\in\mathbf{\Psi}_{t}^y\right\}$, and we write, for simplicity, $\cQ^\varphi_y=\cQ^\varphi_{y,0}$.
Finally, for $Y_0=y\in E_Y$, the nonparametric adaptive robust control problem in this work is formulated as
\begin{align}\label{eq:ar1}
\inf_{\varphi\in\cA}\sup_{\bQ^\psi\in\cQ^\varphi_{y}}&\bE_{\bQ^\psi}[\ell(X_T)],\\
\text{s.t.} \quad &\psi_t \text{ is a copula, } t\in\cT'.\nonumber
\end{align}
Note that for every $t\in\cT'$, any $y\in E_Y$, search for the worst-case copula in $\cC^\alpha_t(y)$ is practically impossible.
To overcome such difficulty, we adopt the method of penalty functions and use it to reformulate \eqref{eq:ar1}.
Ultimately, for every fixed $t\in\cT'$ and any fixed $(y,a)\in E_Y\times A$, we want to find the optimizer $\psi_t(y,a)$ such that $\psi_t(y,a)\circ F_*$ has marginals $F_*^{(i)}$, $i\in\cN$.
Hence, we denote by $C_b(\overline{\bR})$ the set of all continuous, bounded, real valued functions, and consider the penalty function
\begin{align*}
\rho(\psi)=\sum_{t\in\cT'}\rho_t(\psi_t):=\sum_{t\in\cT'}\sum_{i\in\cN}\sup_{f_i\in C_b(\overline{\bR})}\left(\bE_{(\psi_t\circ F_*)^{(i)}}[f_i]-\bE_{F^{(i)}_*}[f_i]\right)
\end{align*}
The choice of the penalty function is inspired by the criterion that characterize the weak convergence of probability distributions, and it is similar to the penalty function used in \cite{GK17}.
Note that for such penalty, we have
\begin{lemma}\label{lemma:penalty}
For any fixed $t\in\cT'$, $\psi_t$ is a copula if and only if $\rho_t(\psi_t)=0$.
\end{lemma}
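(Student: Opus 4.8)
The plan is to decouple the penalty coordinate by coordinate and reduce the copula property to the matching of one-dimensional marginals. For a probability measure $\mu$ and a reference $\nu$ on $\overline{\bR}$, I first record that the unconstrained supremum $\sup_{f\in C_b(\overline{\bR})}\bigl(\bE_\mu[f]-\bE_\nu[f]\bigr)$ takes only the values $0$ and $+\infty$: it is $\geq 0$ because $f\equiv 0$ is admissible; it equals $0$ when $\mu=\nu$; and if $\mu\neq\nu$, then since bounded continuous functions separate probability measures on the Polish space $\overline{\bR}$ there is some $f$ with $\bE_\mu[f]-\bE_\nu[f]=c>0$, and scaling $f\mapsto kf$ drives the difference to $+\infty$. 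Consequently each summand of $\rho_t(\psi_t)$ is nonnegative and vanishes precisely when the $i$-th marginal of $\psi_t\circ F_*$ coincides with (the extension of) $F_*^{(i)}$; by nonnegativity, $\rho_t(\psi_t)=0$ if and only if $(\psi_t\circ F_*)^{(i)}=F_*^{(i)}$ for every $i\in\cN$.

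Second, I would compute the marginal $(\psi_t\circ F_*)^{(i)}$ explicitly. Writing $C=\psi_t$ and using the definition $C\circ F_*(x)=C(F_*\circ x)$ from \eqref{eq:Cbar2} together with the extension to $\overline{\bR}^n$, sending the coordinates $x^{(j)}$, $j\neq i$, to $+\infty$ makes $F_*^{(j)}(x^{(j)})\to 1$, so that
\begin{align*}
(\psi_t\circ F_*)^{(i)}(x)=\psi_t\bigl(1,\ldots,1,F_*^{(i)}(x),1,\ldots,1\bigr)=\psi_t^{(i)}\bigl(F_*^{(i)}(x)\bigr),\quad x\in\overline{\bR},
\end{align*}
where $\psi_t^{(i)}$ denotes the $i$-th marginal CDF of $\psi_t$. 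This identity reduces the condition $(\psi_t\circ F_*)^{(i)}=F_*^{(i)}$ to the functional equation $\psi_t^{(i)}\bigl(F_*^{(i)}(x)\bigr)=F_*^{(i)}(x)$ for all $x\in\overline{\bR}$.

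Combining the two ingredients gives both directions. If $\psi_t$ is a copula then $\psi_t^{(i)}(u)=u$ on $[0,1]$, so the identity yields $(\psi_t\circ F_*)^{(i)}=F_*^{(i)}$ for each $i$ and hence $\rho_t(\psi_t)=0$. Conversely, if $\rho_t(\psi_t)=0$ then $\psi_t^{(i)}\bigl(F_*^{(i)}(x)\bigr)=F_*^{(i)}(x)$ for all $x$; here I invoke the standing assumption that each $F_*^{(i)}$ is continuous and strictly increasing, whose extension to $\overline{\bR}$ has range exactly $[0,1]$, so that $u=F_*^{(i)}(x)$ sweeps all of $[0,1]$ and forces $\psi_t^{(i)}(u)=u$ for every $u\in[0,1]$. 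Thus all marginals of $\psi_t$ are uniform, i.e.\ $\psi_t$ is a copula.

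The genuinely delicate step is the marginal computation on the extended space $\overline{\bR}^n$: one must pass the limits $x^{(j)}\to+\infty$ through the extension and note that any mass $\psi_t$ places on the face $\{u^{(i)}=1\}$ reappears as mass at $+\infty$ in $(\psi_t\circ F_*)^{(i)}$, which is exactly why that marginal can fail to equal the atomless $F_*^{(i)}$ unless $\psi_t^{(i)}$ is the identity. The zero-or-infinity dichotomy of the supremum is standard but worth stating cleanly, as it is what converts the analytic penalty into the exact marginal-matching condition that the rest of the argument exploits.
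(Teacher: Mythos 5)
Your proof is correct and follows essentially the same route as the paper's: both use $f\equiv 0$ to get nonnegativity of each summand, then the fact that $C_b(\overline{\bR})$ separates probability measures (the paper's ``criterion of weak convergence'') to convert $\rho_t(\psi_t)=0$ into the marginal-matching condition $(\psi_t\circ F_*)^{(i)}=F_*^{(i)}$ for all $i\in\cN$. The only differences are ones of explicitness in your favor: you record the zero-or-$+\infty$ dichotomy via scaling (the paper instead argues by a sign-flip contradiction), and you carry out the marginal computation $(\psi_t\circ F_*)^{(i)}=\psi_t^{(i)}\circ F_*^{(i)}$ together with the strict monotonicity of $F_*^{(i)}$ to force $\psi_t^{(i)}(u)=u$, a step the paper leaves implicit.
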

\begin{proof}
  If $\psi_t$ is a copula, then it is obvious that $\rho_t(\psi_t)=0$.
  On the other hand, note that for any $i\in\cN$,
  $$
  \sup_{f\in C_b(\overline{\bR})}\left(\bE_{(\psi_t\circ F_*)^{(i)}}[f]-\bE_{F_*^{(i)}}[f]\right)\geq\bE_{(\psi_t\circ F_*)^{(i)}}[0]-\bE_{F_*^{(i)}}[0]=0.
  $$
  Hence, if $\rho_t(\psi_t)=0$, we obtain that for any $i\in\cN$
  \begin{align*}
    \sup_{f\in\cC_b(\overline{\bR})}\left(\bE_{(\psi_t\circ F_*)^{(i)}}[f]-\bE_{F_*^{(i)}}[f]\right)=0.
  \end{align*}
  Moreover, we get that for any $i\in\cN$, and any $f\in C_b(\overline{\bR})$
  \begin{align}\label{eq:0penalty}
    \bE_{(\psi_t\circ F_*)^{(i)}}[f]-\bE_{F_*^{(i)}}[f]=0.
  \end{align}
  Otherwise, there exists some $i\in\cN$ and $f_0\in C_b(\overline{\bR})$ such that
  $$
  \bE_{(\psi_t\circ F_*)^{(i)}}[f_0]-\bE_{F_*^{(i)}}[f_0]<0.
  $$
  In turn, for such $i$, we have
\begin{align*}
\sup_{f\in C_b(\overline{\bR})}\left(\bE_{(\psi_t\circ F_*)^{(i)}}[f]-\bE_{F_*^{(i)}}[f]\right)\geq&\bE_{(\psi_t\circ F_*)^{(i)}}[-f_0]-\bE_{F_*^{(i)}}[-f_0]\\
=&-\left(\bE_{(\psi_t\circ F_*)^{(i)}}[f_0]-\bE_{F_*^{(i)}}[f_0]\right)>0,
\end{align*}
and $\rho_t(\psi_t)>0$ as a result which is a contradiction.
Therefore, equality~\ref{eq:0penalty} holds true.
Consequently, by the criterion of weak convergence, we get that $\psi_t$ is a copula.
\end{proof}
The above result justifies the choice of $\rho$.
Another advantage of using the proposed penalty function is that for any fixed $f\in C_b(\overline{\bR})$, the linearity in $\psi_t\circ F_*$ allows us to obtain a tractable numerical computation scheme (cf. Section~\ref{sec:algorithm}) for the value functions to be defined later.
To proceed, we provide the following technical result on the regularity property of $\rho_t$, $t\in\cT'$, which plays an important role in proving the existence of optimal control.
\begin{lemma}\label{lemma:lsc_pena}
For every $t\in\cT'$, the penalty term $\rho_t$ is lower semi-continuous (l.s.c.).
\end{lemma}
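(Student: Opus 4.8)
The plan is to exploit the representation of $\rho_t$ as a finite sum of suprema of continuous functionals, and to invoke the two standard facts that a finite sum of lower semi-continuous functions is lower semi-continuous, and that a pointwise supremum of any family of lower semi-continuous functions is lower semi-continuous. Writing
\[
\rho_t(\psi_t)=\sum_{i\in\cN}\rho_t^{(i)}(\psi_t), \qquad \rho_t^{(i)}(\psi_t):=\sup_{f\in C_b(\overline{\bR})}\left(\bE_{(\psi_t\circ F_*)^{(i)}}[f]-\bE_{F^{(i)}_*}[f]\right),
\]
and noting that $\cN$ is finite, it suffices to prove that each $\rho_t^{(i)}$ is lower semi-continuous on $(\cP([0,1]^n),d_{W,p})$.

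First I would fix $i\in\cN$ and $f\in C_b(\overline{\bR})$ and show that the map $\psi_t\mapsto \bE_{(\psi_t\circ F_*)^{(i)}}[f]-\bE_{F^{(i)}_*}[f]$ is continuous; the second term is a constant, so only the first summand requires attention. Suppose $\psi_t^{(k)}\to\psi_t$ in $d_{W,p}$, equivalently (by compactness of $[0,1]^n$) weakly. The argument in step~1 of the proof of Proposition~\ref{prop:bmsk}, based on the homeomorphism $u\mapsto F_*^{-1}\circ u$, already shows that $\psi_t^{(k)}\circ F_*\to\psi_t\circ F_*$ weakly in $\cP(\overline{\bR}^n)$. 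Since the projection onto the $i$th coordinate is continuous, weak convergence is preserved under marginalization, so $(\psi_t^{(k)}\circ F_*)^{(i)}\to(\psi_t\circ F_*)^{(i)}$ weakly in $\cP(\overline{\bR})$. Because $f$ is bounded and continuous, weak convergence yields $\bE_{(\psi_t^{(k)}\circ F_*)^{(i)}}[f]\to\bE_{(\psi_t\circ F_*)^{(i)}}[f]$, establishing the desired continuity for each fixed $f$.

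Then I would conclude by taking the supremum over $f\in C_b(\overline{\bR})$: the pointwise supremum of continuous (hence lower semi-continuous) functions is lower semi-continuous, so $\rho_t^{(i)}$ is lower semi-continuous, and summing the finitely many $\rho_t^{(i)}$ preserves this property. The main obstacle, and the only step requiring care, is the passage from weak convergence of $\psi_t^{(k)}$ to weak convergence of the marginals $(\psi_t^{(k)}\circ F_*)^{(i)}$ on the compactified space $\overline{\bR}$; this is supplied precisely by the homeomorphism argument from Proposition~\ref{prop:bmsk} together with the elementary fact that marginalization is weakly continuous. Observe that one needs only continuity of each $f$-indexed functional, not uniformity in $f$, so no interchange of limit and supremum is required: for each fixed $f$ one has $\liminf_k \rho_t^{(i)}(\psi_t^{(k)})\geq \liminf_k\big(\bE_{(\psi_t^{(k)}\circ F_*)^{(i)}}[f]-\bE_{F^{(i)}_*}[f]\big)=\bE_{(\psi_t\circ F_*)^{(i)}}[f]-\bE_{F^{(i)}_*}[f]$, and taking the supremum over $f$ on the right recovers $\rho_t^{(i)}(\psi_t)$, which is exactly lower semi-continuity.
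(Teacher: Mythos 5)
Your proof is correct and follows essentially the same route as the paper's: both hinge on the continuity, for each fixed $f\in C_b(\overline{\bR})$, of the map $C\mapsto\bE_{(C\circ F_*)^{(i)}}[f]$ (supplied by the weak-convergence/homeomorphism argument in the proof of Proposition~\ref{prop:bmsk}), followed by the observation that a supremum of such functionals is lower semi-continuous. The paper merely unwinds the abstract facts you invoke (a supremum of continuous functions is l.s.c., and a finite sum of nonnegative l.s.c. functions is l.s.c.) into an explicit $\liminf$ computation with a case distinction between $\rho_t(C_0)=+\infty$ and $\rho_t(C_0)<+\infty$, treating the sum over $i\in\cN$ jointly rather than coordinate-by-coordinate as you do.
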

\begin{proof}
We will show that $\liminf_{C\to C_0}\rho_t(C)\geq\rho_t(C_0)$ for any $C_0\in\cP([0,1]^n)$.
To this end, let $C_j$ be any sequence that converges to $C_0$.
If $\rho_t(C_0)=+\infty$, then, for any $b\in\bR$, there exists a family $\{f_i,i\in\cN\}\subset C_b(\overline{\bR})$ such that
$$
\sum_{i\in\cN}\left(\bE_{(C_0\circ F_*)^{(i)}}[f_i]-\bE_{F_*^{(i)}}[f_i]\right)>b.
$$
Since $C_j\to C_0$, then there exists a $n_0>0$ such that for any $j\geq n_0$ we get
$$
\sum_{i\in\cN}\left(\bE_{(C_j\circ F_*)^{(i)}}[f_i]-\bE_{F_*^{(i)}}[f_i]\right)>\frac{b}{2},
$$
where continuity of $\bE_{(C\circ F_*)^{(i)}}[f_i]$ in $C$ is obtained according to the proof of Proposition~\ref{prop:bmsk}.
Thus, we have that
$$
\rho_t(C_j)>\frac{b}{2}, \quad j\geq n_0,
$$
and moreover
$$
\lim_{j\to\infty}\rho_t(C_j)=+\infty.
$$
If $\rho_t(C_0)<+\infty$, then for any $\varepsilon>0$, there exists a family $\{f_i,i\in\cN\}\subset C_b(\overline{\bR})$ such that
$$
\sum_{i\in\cN}\left(\bE_{(C_0\circ F_*)^{(i)}}[f_i]-\bE_{F_*^{(i)}}[f_i]\right)>\rho_t(C_0)-\varepsilon.
$$
Hence, from the convergence of $C_j$ to $C_0$, there exists $n_0>0$ and for any $j\geq n_0$ the following inequality holds true
\begin{align*}
\sum_{i\in\cN}\left(\bE_{(C_j\circ F_*)^{(i)}}[f_i]-\bE_{F_*^{(i)}}[f_i]\right)>\rho_t(C_0)-2\varepsilon.
\end{align*}
Thus, $\rho_t(C_j)\geq\rho_t(C_0)-2\varepsilon$ for any $j\geq n_0$, and $\lim_{j\to\infty}\rho_t(C_j)\geq\rho_t(C_0)-2\varepsilon$.
Because $\varepsilon$ is arbitrary, we get that $\lim_{j\to\infty}\rho_t(C_j)\geq\rho_t(C_0)$.
In summary, we conclude that $\liminf_{C\to C_0}\rho_t(C)\geq\rho_t(C_0)$ and $\rho_t$ is l.s.c..
\end{proof}
In the sequel, we will consider a reformulation of \eqref{eq:ar1} as follows
\begin{align}\label{eq:ar2}
\inf_{\varphi\in\cA}\sup_{\bQ^\psi\in\cQ^\varphi_{y}}\left(\bE_{\bQ^\psi}[\ell(X_T)]-\rho(\psi)\right).
\end{align}

Before discussing the solution of \eqref{eq:ar2}, we want to remark that, in this work, the uncertainty set is some set of measures on $\cP([0,1]^n)$ centered at the empirical copula.
Another approach is to consider the worst-case model of $Z_t$, $t\in\cT''$, chosen from some uncertainty set centered around the empirical distribution of $Z_t$.
By imposing a penalty function similar to $\rho$, we can ensure that the worst-case model matches $F_*$ in terms of the marginals.
Thus, we are able to set up a problem that is similar to \eqref{eq:ar2} without considering the copula.
Regarding the comparison between such approach and our method, we refer to Section~\ref{sec:comparison} below.
In case that the marginals are also uncertain, one can extend the framework in \cite{BC2022} to the multi-dimensional case via the multivariate empirical distribution.

\section{Solution of the Nonparametric Adaptive Robust Control Problem}\label{sec:solution}

In this section, we show that the solution of the nonparametric adaptive robust control problem \eqref{eq:ar2} is given by solving the following adaptive robust Bellman equations

\begin{align}\label{eq:bellman1}
V_T(y)&=\ell(x), \quad y\in E_Y,\nonumber \\
V_t(y)&=\inf_{a\in A}\sup_{C\in\cC^\alpha_t(y)}\Big(\bE_{C\circ F_*}[V_{t+1}(\mathbf{G}(t,y,a,Z_{t+1})]-\rho_t(C)\Big), \quad y\in E_Y,\ t\in\cT'.
\end{align}

\subsection{Existence of Measurable Optimal Control}

To show existence of measurable optimal control to the problem \eqref{eq:bellman1}, we denote
\begin{align*}
v_t(y,a,C)&=\bE_{C\circ F_*}[V_{t+1}(\mathbf{G}(t,y,a,Z_{t+1})]-\rho_t(C), \quad t\in\cT',
\end{align*}
and provide the following preliminary results.

\begin{proposition}\label{prop:usc}
For every $t\in\cT$, the functions $v_t$ and $V_t$ are upper semi-continuous (u.s.c.).
Moreover, there exists a Borel measurable function $\psi^*_t$ takes values in the set of copulae such that for any fixed $(y,a)$
\begin{align}\label{eq:worst_model}
\sup_{C\in\cC^\alpha_t(y)}v_t(y,a,C)=v_t(y,a,\psi^*_t(y,a)).
\end{align}
\end{proposition}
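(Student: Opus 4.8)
The plan is to argue by backward induction on $t$ from $T$ down to $0$, carrying along the joint statement that $V_t$ is u.s.c.\ and bounded above, that $v_t$ is u.s.c.\ on $E_Y\times A\times\cP([0,1]^n)$, and that a copula-valued Borel measurable maximizer $\psi^*_t$ exists. Three analytic ingredients drive the argument: the preservation of upper semicontinuity when an u.s.c.\ bounded-above integrand is integrated against the continuous stochastic kernel $Q_t$ of Proposition~\ref{prop:bmsk}; a maximum theorem for u.s.c.\ functions over the compact-valued, upper hemicontinuous correspondence $\cC^\alpha_t$ of Lemma~\ref{lemma:uhc}; and a Borel measurable selection theorem, for which I would appeal to the selection machinery of \cite{BS1978}. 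Throughout I would use that $E_Y=\overline{\bR}^n\times\cP([0,1]^n)$ is compact, since both factors are, so that every continuous function on $E_Y$ is automatically bounded.

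For the base case $t=T$, the function $\ell$ is continuous and bounded from above, so $V_T(y)=\ell(x)$ is u.s.c.\ on $E_Y$ once extended by the boundary values $\ell(\infty)=\lim_{x\to\infty}\ell(x)$. For the inductive step, assume $V_{t+1}$ is u.s.c.\ and bounded above. I would first establish that $v_t$ is u.s.c. Because $V_{t+1}$ is u.s.c.\ and bounded above on the compact space $E_Y$, it is the pointwise decreasing limit of a sequence $(g_k)$ of bounded continuous functions. By the continuity of $Q_t$ (Proposition~\ref{prop:bmsk}), each map $(y,a,C)\mapsto\int g_k\,dQ_t(\cdot\,|\,y,a,C)=\bE_{C\circ F_*}[g_k(\mathbf{G}(t,y,a,Z_{t+1}))]$ is continuous, and monotone convergence gives $\bE_{C\circ F_*}[V_{t+1}(\mathbf{G}(t,y,a,Z_{t+1}))]=\inf_k\int g_k\,dQ_t$; being a decreasing infimum of continuous functions, this term is u.s.c. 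Since $-\rho_t$ is u.s.c.\ by Lemma~\ref{lemma:lsc_pena}, the sum $v_t$ is u.s.c.

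I would next treat the inner supremum. Each value $\cC^\alpha_t(y)$ is a closed Wasserstein ball inside the weakly compact space $\cP([0,1]^n)$, hence compact, so $\sup_{C\in\cC^\alpha_t(y)}v_t(y,a,C)$ is attained; combined with the upper hemicontinuity of $\cC^\alpha_t$ (Lemma~\ref{lemma:uhc}), the maximum theorem for u.s.c.\ functions over compact-valued u.h.c.\ correspondences yields that $(y,a)\mapsto\sup_{C\in\cC^\alpha_t(y)}v_t(y,a,C)$ is u.s.c., and a Borel measurable selection theorem (\cite{BS1978}) produces a Borel map $\psi^*_t(y,a)$ attaining it. The copula-valued conclusion rests on the fact that the penalty is a hard constraint in disguise: if the marginals of $C\circ F_*$ fail to match some $F_*^{(i)}$, then scaling a separating test function in the definition of $\rho_t$ forces $\rho_t(C)=+\infty$, so $v_t(y,a,C)=-\infty$ there, whereas $\rho_t=0$ on copulae by Lemma~\ref{lemma:penalty}. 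Since $\cC^\alpha_t(y)$ contains genuine copulae (for instance $C_*$ on the concentration event) and $\bE_{C\circ F_*}[V_{t+1}]\le\sup V_{t+1}<\infty$, the supremum is finite and can be attained only at a copula, so the selector may be taken copula-valued. Finally, $V_t(y)=\inf_{a\in A}\sup_{C}v_t(y,a,C)$ is a pointwise infimum of u.s.c.\ functions of $y$ and is therefore u.s.c., while $v_t\le\sup V_{t+1}$ keeps $V_t$ bounded above, closing the induction.

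I expect the measurable selection step, together with the verification that the maximizer lands among the copulae, to be the main obstacle: it requires coordinating the semicontinuity and selection theorems of \cite{BS1978} with the sharp $\{0,+\infty\}$ dichotomy of $\rho_t$, and also requires care that the uncertainty set actually meets the set of copulae so that the penalized supremum is finite.
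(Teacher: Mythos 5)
Your proposal is correct and follows essentially the same route as the paper's own proof: backward induction, upper semicontinuity of the expectation via the continuous kernel of Proposition~\ref{prop:bmsk} (the paper cites \cite[Proposition 7.31]{BS1978} where you re-derive that fact by monotone approximation with continuous functions), lower semicontinuity of the penalty from Lemma~\ref{lemma:lsc_pena}, compactness of $\cP([0,1]^n)$ together with Lemma~\ref{lemma:uhc} feeding a maximum-plus-measurable-selection result (\cite[Proposition 7.33]{BS1978}), and the $\{0,+\infty\}$ dichotomy of $\rho_t$ to conclude that the selector is copula-valued. The minor refinements you add (explicitly tracking boundedness from above through the induction, and noting that the uncertainty set must actually meet the set of copulae for the penalized supremum to be attained there) are sensible but do not change the underlying argument, which coincides with the paper's.
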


\begin{proof}
By our assumption, the function $V_T(y)=\ell(x)$ is continuous and hence u.s.c..
For any $C\in\cP([0,1]^n)$, we write
\begin{align*}
\bE_{C\circ F_*}[V_T(\mathbf{G}(T-1,y,a,Z_{T}))]=\int_{E_Y}V_T(y_T)dQ_{T-1}(y_T|y,a,C).
\end{align*}
According to Proposition~\ref{prop:bmsk}, $Q_{T-1}$ is a continuous stochastic kernel.
Then, in view of the assumption that $\ell$ is bounded from above and \cite[Proposition 7.31]{BS1978}, we get that
$$
\bE_{C\circ F_*}[V_T(\mathbf{G}(T-1,y,a,Z_{T}))]
$$
is u.s.c..
On the other hand, by Lemma~\ref{lemma:lsc_pena}, $-\rho_{T-1}(C)$ is u.s.c., and in turn, $v_{T-1}$ are u.s.c..

Next, since that $[0,1]^n$ is compact, then $\cP([0,1]^n)$ (equipped with metric $d_{W,p}$) is compact.
Let $\mathrm{D}=\bigcup_{(y,a)\in E_Y\times A}(\{(y,a)\}\times\cC^\alpha_t(y))$ which can be viewed as the graph of the set valued function $\cC^\alpha_t(y)$.
Note that $\cC^\alpha_t(y)$ is closed and, by Lemma~\ref{lemma:uhc}, is u.h.c..
Hence, the set $\mathrm{D}$ is closed.
According to \cite[Proposition 7.33]{BS1978}, the function $\check{V}_{T-1}(y,a)=\sup_{C\in\cC^\alpha_t(y)}v_{T-1}(y,a,C)$ is u.s.c..
As a result, $V_{T-1}(y)=\inf_{a\in A}\check{V}_{T-1}(y,a)$ is u.s.c. as well.
Moreover, for any fixed $(y,a)$, there exists a Borel measurable function $\psi^*$ such that \eqref{eq:worst_model} holds true.

Last but not the least, for any $C\in\cC^\alpha_{T-1}(y)$ that is not a copula, it is not hard to see that $\rho_{T-1}(C)=+\infty$.
Consequently, for such $C$, $v_{T-1}(y,a,C)=-\infty$.
For any $C$ that is a copula, we have
$$
v_{T-1}(y,a,C)=\bE_{C\circ F_*}[V_T(\mathbf{G}(t,y,a,Z_T))]\geq-\infty.
$$
This indicates that for any $(y,a)$, $\psi^*_{T-1}(y,a)$ is a copula. 
The rest of the proof follows analogously.
\end{proof}

The above proposition proves the existence of measurable worst-case copula selector.
Regarding the optimal control, we use the semi-analyticity property of the value function $V_t$ as follows.

\begin{proposition}\label{prop:lsa}
For every $t\in\cT$, the function $V_t$ is lower semi-analytic (l.s.a.).
Moreover, for any $\varepsilon>0$, there exists an analytically measurable function $\varphi^{*,\varepsilon}_t$ such that
\begin{align}\label{eq:optimal_stratgy}
\sup_{C\in\cC^\alpha_t(y)}v_t(y,\varphi^{*,\varepsilon}_t(y),C)=
\begin{cases}
V_t(y)+\varepsilon \quad &\text{if } V_t(y)>-\infty,\\
-1/\varepsilon \quad &\text{if } V_t(y)=-\infty.
\end{cases}
\end{align}
In other words, $\varphi^{*,\varepsilon}_t$ is the $\varepsilon$-optimal control.
\end{proposition}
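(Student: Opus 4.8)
The plan is to proceed by backward induction on $t$ and to obtain both assertions from the measurable-selection machinery of \cite{BS1978}, leaning on the regularity already furnished by Proposition~\ref{prop:usc}. The lower semi-analyticity of $V_t$ is in fact the easier half: since Proposition~\ref{prop:usc} establishes that each $V_t$ (and each $v_t$) is upper semi-continuous, it is Borel measurable and hence a fortiori lower semi-analytic, which settles the first claim. Along the way I would also record the uniform bound $V_t\le\sup_x\ell(x)<\infty$, proved inductively from $-\rho_t\le0$ and the fact that $Q_t(\cdot|y,a,C)$ is a probability measure; this keeps every integral and sum below well defined and free of the form $+\infty-\infty$.

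If one prefers a self-contained derivation of l.s.a. that does not quote the full u.s.c. statement, the induction runs as follows. For the base case $V_T=\ell$ is continuous, hence l.s.a. For the step, assume $V_{t+1}$ is l.s.a. and bounded above. Since $Q_t$ is a Borel measurable stochastic kernel by Proposition~\ref{prop:bmsk}, the map $(y,a,C)\mapsto\int_{E_Y}V_{t+1}\,dQ_t(\cdot|y,a,C)$ is l.s.a. by \cite[Proposition 7.48]{BS1978}; adding the term $-\rho_t(C)$, which is u.s.c. by Lemma~\ref{lemma:lsc_pena} and hence l.s.a., and using closure of the l.s.a. class under well-defined sums (\cite[Lemma 7.30]{BS1978}), shows $v_t$ is l.s.a. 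The only delicate operation is the inner maximization $\check V_t(y,a):=\sup_{C\in\cC^\alpha_t(y)}v_t(y,a,C)$, because a partial supremum need not preserve lower semi-analyticity; here I would use the Borel worst-case selector $\psi^*_t$ produced by Proposition~\ref{prop:usc} to write $\check V_t(y,a)=v_t(y,a,\psi^*_t(y,a))$, a composition of an l.s.a. map with a Borel map, which is again l.s.a. Then $V_t(y)=\inf_{a\in A}\check V_t(y,a)$ is a partial infimum over the compact (hence Borel) set $A$, which preserves l.s.a. by \cite[Proposition 7.47]{BS1978}, closing the induction.

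With $\check V_t$ shown to be l.s.a. on $E_Y\times A$ and $V_t=\inf_{a\in A}\check V_t$, the $\varepsilon$-optimal control is obtained by applying the Jankov--von Neumann selection theorem, \cite[Proposition 7.50]{BS1978}: for each $\varepsilon>0$ it yields an analytically measurable map $\varphi^{*,\varepsilon}_t:E_Y\to A$ with graph in $E_Y\times A$ satisfying the relation~\eqref{eq:optimal_stratgy}, with the customary $-1/\varepsilon$ branch on $\{V_t=-\infty\}$. The main obstacle is conceptual rather than computational: because $\check V_t$ is only upper semi-continuous, the outer infimum over the compact set $A$ need not be attained, so one cannot hope for an exact Borel minimizer and must instead settle for $\varepsilon$-suboptimal selectors whose natural measurability is analytic rather than Borel; it is precisely for this reason that the argument is cast in the language of lower semi-analytic functions and the analytic selection theorem, rather than in the u.s.c./compactness framework used for the inner worst-case copula in Proposition~\ref{prop:usc}.
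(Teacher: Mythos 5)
Your proposal is correct and follows essentially the same route as the paper: backward induction, lower semi-analyticity of the integral term via the Borel measurability of $Q_t$ (Proposition~\ref{prop:bmsk}), l.s.a.\ of the inner supremum by composing $v_t$ with the Borel worst-case selector $\psi^*_t$ from Proposition~\ref{prop:usc} (part (3) of \cite[Lemma 7.30]{BS1978}), and then \cite[Proposition 7.50]{BS1978} to handle the partial infimum over $A$ and produce the analytically measurable $\varepsilon$-optimal selector with the $-1/\varepsilon$ branch. Your opening shortcut --- that upper semi-continuity already gives Borel measurability and hence lower semi-analyticity --- is also valid and would shorten the first claim, but since you then reproduce the paper's inductive machinery anyway (which is what is actually needed to get $\check V_t$ l.s.a.\ jointly in $(y,a)$ for the selection step), the two arguments are in substance the same.
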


\begin{proof}
Since $V_T(y)=\ell(x)$ is continuous by assumption, then $V_T$ is l.s.a..
Recall
\begin{align*}
v_{T-1}(y,a,C,\lambda_{T-1})=\int_{E_Y}V_T(y_T)Q_{T-1}(dy_T|y,a,C)-\rho_{T-1}(C).
\end{align*}
The first term on the RHS is l.s.a. as $Q_{T-1}$ is a Borel measurable stochastic kernel by Proposition~\ref{prop:bmsk}, and the second term $-\rho_{T-1}(C)$ is l.s.a. because it is u.s.c..
Therefore, the function $v_{T-1}(y,a,C)$ is l.s.a..

In view of \eqref{eq:worst_model} in Proposition~\ref{prop:usc} where $\psi^*_{T-1}$ is Borel measurable, we deduce that
$$
\widehat{V}_t(y,a)=\sup_{C\in\cC^{\alpha}_{T-1}(y)}v_{T-1}(y,a,C)
$$
is l.s.a. according to part (3) of \cite[Lemma 7.30]{BS1978}.

Next, the set $A$ is also Borel measurable and in turn analytic.
Hence, by \cite[Proposition 7.50]{BS1978}, $V_{T-1}(y)$ is l.s.a., and \eqref{eq:optimal_stratgy} holds true for $t=T-1$.
The rest of the proof follows analogously.
\end{proof}

\begin{remark}
We want to stress that in general the result in Lemma~\ref{prop:lsa} cannot be improved in the sense that one is unable to obtain the measurable optimal control instead of $\varepsilon$-optimal control.
Following \cite[Proposition 7.50]{BS1978}, on the set
$$
I=\left\{y\in E_Y: \exists a_y\in A \text{ such that } \widehat{V}_t(y,a_y)=V_t(y)\right\}
$$
one will get a universally measurable optimal control.
In our setup, all the continuous points of $V_t$ belong to $I$.
However, due to the presence of the penalty term, it is impossible to show the lower semi-continuity of $V_t$ to the best of our knowledge.
\end{remark}

Now, we proceed to show that the problem \eqref{eq:ar2} satisfies the Bellman principle and is solved by \eqref{eq:bellman1}.

\subsection{Dynamic Programming}

Here we will use Proposition~\ref{prop:usc} and Proposition~\ref{prop:lsa} to show that problem \eqref{eq:ar2} is solved by the Bellman equation \eqref{eq:bellman1}.
Denote
$$
\rho_{t:T-1}(\psi)=\sum_{s=t}^{T-1}\rho_s(\psi_s),
$$
then we have the main result of this section as follows.

\begin{theorem}
For every $t\in\cT'$, and any $y\in E_Y$, we have
\begin{align*}
V_t(y)=\inf_{\varphi\in\cA_t}\sup_{\bQ\in\cQ^\varphi_{y,t}}\left(\bE_{\bQ}[\ell(X_T)]-\rho_{t:T-1}(\psi)\right).
\end{align*}
\end{theorem}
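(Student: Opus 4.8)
The plan is to prove the identity by backward induction on $t$, from $t=T$ down to $t=0$. Write
\begin{align*}
J_t(y,\varphi):=\sup_{\bQ\in\cQ^\varphi_{y,t}}\left(\bE_{\bQ}[\ell(X_T)]-\rho_{t:T-1}(\psi)\right),\qquad W_t(y):=\inf_{\varphi\in\cA_t}J_t(y,\varphi),
\end{align*}
so that the asserted right-hand side is $W_t(y)$ and the goal is $W_t=V_t$ on $E_Y$. The induction hypothesis at level $t+1$ is that $V_{t+1}=W_{t+1}$, strengthened to record that the concatenation of the $\varepsilon$-optimal controls of Proposition~\ref{prop:lsa} is $(T-1-t)\varepsilon$-optimal for $J_{t+1}$; the base case $t=T$ holds by the convention $W_T=\ell=V_T$. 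For the inductive step I would first exploit the concatenation structure of $\bQ^{\varphi,\psi}_{y,t}$: since this measure factors as the time-$t$ kernel $Q_t(\,\cdot\mid y,\varphi_t(y),\psi_t(y))$ composed with the shifted measure $\bQ^{\varphi,\psi}_{y_{t+1},t+1}$, the tower property peels off the first step, while the cumulative penalty splits additively as $\rho_{t:T-1}=\rho_t(\psi_t(y))+\rho_{t+1:T-1}$, the leading term being deterministic once the current state $y$ is fixed.

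The crux is to interchange the supremum over the model selector with the integration against $Q_t$, namely to show
\begin{align*}
\sup_{\psi}\int_{E_Y}g(y',\psi)\,Q_t(dy'\mid y,a,C)=\int_{E_Y}\sup_{\psi}g(y',\psi)\,Q_t(dy'\mid y,a,C),
\end{align*}
where $g$ denotes the peeled future objective and $a=\varphi_t(y)$, $C=\psi_t(y)$. The inequality ``$\le$'' is immediate from monotonicity. The reverse ``$\ge$'' is exactly where the selection results enter: Proposition~\ref{prop:usc} furnishes a Borel measurable worst-case copula selector attaining the inner supremum, using the u.s.c. of $v_t$, the compactness of $\cP([0,1]^n)$, and the upper hemi-continuity of $\cC^\alpha_t$ from Lemma~\ref{lemma:uhc}; plugging a measurable near-worst-case selector along the running trajectory $y'$ then realizes the pointwise supremum inside the integral up to any $\varepsilon$. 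After invoking the induction hypothesis $V_{t+1}=W_{t+1}$ to replace the future value by $V_{t+1}$, the peeled expression collapses to $\sup_{C\in\cC^\alpha_t(y)}\left(\int_{E_Y}V_{t+1}(y')\,Q_t(dy'\mid y,a,C)-\rho_t(C)\right)=\sup_{C}v_t(y,a,C)$.

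With this one-step reduction I would close the two inequalities. For $W_t\ge V_t$, fix an arbitrary $\varphi\in\cA_t$, set $a=\varphi_t(y)$, and use $J_{t+1}(\cdot,\varphi')\ge W_{t+1}=V_{t+1}$ (where $\varphi'$ is the restriction of $\varphi$ to $\cA_{t+1}$) together with the selection above to obtain $J_t(y,\varphi)\ge\sup_C v_t(y,a,C)\ge\inf_{a'\in A}\sup_C v_t(y,a',C)=V_t(y)$; taking the infimum over $\varphi$ preserves the bound. For $W_t\le V_t$, invoke Proposition~\ref{prop:lsa} to choose, for each $\varepsilon>0$, an analytically measurable $\varepsilon$-optimal control at time $t$, concatenate it with the future $\varepsilon$-optimal controls, and apply the strengthened hypothesis $J_{t+1}(\cdot,\varphi^{*,\varepsilon})\le V_{t+1}+(T-1-t)\varepsilon$ to get $W_t(y)\le\sup_C v_t(y,\varphi^{*,\varepsilon}_t(y),C)+(T-1-t)\varepsilon\le V_t(y)+(T-t)\varepsilon$, and let $\varepsilon\downarrow0$.

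I expect the measurable-selection interchange to be the main obstacle: the ``$\ge$'' direction requires selecting the worst-case copula measurably as a function of the running state and keeping the analytic-measurability bookkeeping intact so the integrals against the Borel kernel $Q_t$ (Proposition~\ref{prop:bmsk}) stay well defined, while the accumulation of the $\varepsilon$-errors from the controls of Proposition~\ref{prop:lsa} must be tracked carefully through the recursion. The l.s.a./u.s.c. regularity from Propositions~\ref{prop:usc} and~\ref{prop:lsa}, together with \cite[Propositions 7.30, 7.33, 7.50]{BS1978}, is precisely what legitimizes these interchanges.
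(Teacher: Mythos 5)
Your proposal is correct and follows essentially the same route as the paper's own proof: backward induction in $t$, peeling off the time-$t$ step through the product structure of $\bQ^{\varphi,\psi}_{y,t}$ and the additive splitting of the penalty, the sup/integral interchange resting on the measurable selections of Propositions~\ref{prop:usc} and~\ref{prop:lsa}, the lower bound $W_t\geq V_t$ from the definition of the infimum, and the upper bound via $\varepsilon$-optimal controls with $\varepsilon\downarrow 0$. Your only deviations --- starting the induction at $t=T$ rather than $t=T-1$, and explicitly tracking the accumulated $(T-t)\varepsilon$ error through a strengthened induction hypothesis instead of invoking, as the paper does, an $\varepsilon$-optimal control for the whole tail problem $J_{t+1}$ --- are minor bookkeeping refinements of the same argument.
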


\begin{proof}
We will prove the statement via backward induction in $t=T-1,\ldots,1,0$.

When $t=T-1$ and $y\in E_Y$, we have
\begin{align*}
&\inf_{\varphi\in\cA_{T-1}}\sup_{\bQ^\psi\in\cQ_{y,T-1}^\varphi}\left(\bE_{\bQ^\psi}[\ell(X_T)]-\rho_{T-1}(\psi)\right)\\
=&\inf_{a\in A}\sup_{C\in\cC^\alpha_{T-1}(y)}\left(\int_{E_Y}V_T(y')Q_{T-1}(y'|y,a,C)-\rho_{T-1}(C)\right)=V_{T-1}(y).
\end{align*}
Next, for $t=T-2,\ldots,0$ and $y\in E_Y$, denote $\varphi^t=\{\varphi_s,s=t,\ldots,T-1\}$ and $\psi^t=\{\psi_s,s=t,\ldots,T-1\}$, then
\begin{align*}
&\inf_{\varphi\in\cA_t}\sup_{\bQ^\psi\in\cQ_{y,t}^\varphi}\left(\bE_{\bQ^\psi}[\ell(X_T)]-\rho_{t:T-1}(\psi)\right)\\
=&\inf_{(a,\varphi^{t+1})\in\cA_t}\sup_{C\in\cC^\alpha_t(y)}\int_{E_Y}\sup_{\bQ^{\psi^{t+1}}\in\cQ^{\varphi^{t+1}}_{y',t+1}}\left(\bE_{\bQ^{\psi^{t+1}}}[\ell(X_T)]-\rho_{t:T-1}((C,\psi^{t+1}))\right)Q_t(dy'|y,a,C),
\end{align*}
where by induction
\begin{align*}
&\sup_{\bQ^{\psi^{t+1}}\in\cQ^{\varphi^{t+1}}_{y',t+1}}\left(\bE_{\bQ^{\psi^{t+1}}}[\ell(X_T)]-\rho_{t:T-1}((C,\psi^{t+1}))\right)\\
=&\sup_{\bQ^{\psi^{t+1}}\in\cQ^{\varphi^{t+1}}_{y',t+1}}\left(\bE_{\bQ^{\psi^{t+1}}}[\ell(X_T)]-\rho_{t+1:T-1}(\psi^{t+1})-\lambda_t\rho_t(C)\right)\\
\geq&V_{t+1}(y')-\rho_t(C).
\end{align*}
Hence, we get
\begin{align}\label{eq:loe}
&\inf_{\varphi\in\cA_t}\sup_{\bQ^\psi\in\cQ_{y,t}^\varphi}\left(\bE_{\bQ^\psi}[\ell(X_T)]-\rho_{t:T-1}(\psi)\right) \nonumber\\
\geq&\inf_{(a,\varphi^{t+1})\in\cA_t}\sup_{C\in\cC^\alpha_t(y)}\int_{E_Y}\left(V_{t+1}(y')-\rho_t(C)\right)Q_t(dy'|y,a,C) \nonumber\\
=&\inf_{a\in A}\sup_{C\in\cC^\alpha_t(y)}\left(\int_{E_Y}V_{t+1}(y')Q_t(dy'|y,a,C)-\rho_t(C)\right)=V_t(y).
\end{align}

On the other hand, for any $\varepsilon>0$, let $\varphi^{t+1,\varepsilon}\in\cA_{t+1}$ be an $\varepsilon$-optimal control starting at time $t+1$.
We obtain that
\begin{align*}
&\sup_{\bQ^{\psi^{t+1}}\in\cQ^{\varphi^{t+1,\varepsilon}}_{y',t+1}}\left(\bE_{\bQ^{\psi^{t+1}}}[\ell(X_T)]-\rho_{t:T-1}((C,\psi^{t+1}))\right)\\
\leq&\inf_{\varphi^{t+1}\in\cA_{t+1}}\sup_{\bQ^{\psi^{t+1}}\in\cQ^{\varphi^{t+1}}_{y',t+1}}\left(\bE_{\bQ^{\psi^{t+1}}}[\ell(X_T)]-\rho_{t:T-1}((C,\psi^{t+1}))\right)+\varepsilon\\
=&\left(V_{t+1}(y')-\lambda_t\rho_t(C)\right)+\varepsilon.
\end{align*}
In what follows, we have
\begin{align*}
&\inf_{\varphi\in\cA_t}\sup_{\bQ^\psi\in\cQ_{y,t}^\varphi}\left(\bE_{\bQ^\psi}[\ell(X_T)]-\rho_{t:T-1}(\psi)\right)\\
\leq&\inf_{a\in A}\sup_{\bQ^{\psi}\in\cQ^{(a,\varphi^{t+1,\varepsilon})}_{y',t+1}}\left(\bE_{\bQ^{\psi}}[\ell(X_T)]-\rho_{t:T-1}((C,\psi^{t+1}))\right)\\
=&\inf_{a\in A}\sup_{C\in\cC^\alpha_t(y)}\int_{E_Y}\sup_{\bQ^{\psi^{t+1}}\in\cQ^{\varphi^{t+1,\varepsilon}}_{y',t+1}}\left(\bE_{\bQ^{\psi^{t+1}}}[\ell(X_T)]-\rho_{t:T-1}((C,\psi^{t+1}))\right)Q_t(dy'|y,a,C)\\
\leq&\inf_{a\in A}\sup_{C\in\cC^\alpha_t(y)}\int_{E_Y}\left(V_{t+1}(y')-\rho_t(C)\right)Q_t(dy'|y,a,C)+\varepsilon\\
=&\inf_{a\in A}\sup_{C\in\cC^\alpha_t(y)}\left(\int_{E_Y}V_{t+1}(y')Q_t(dy'|y,a,C)-\rho_t(C)\right)+\varepsilon\\
=&V_t(y)+\varepsilon.
\end{align*}
Since $\varepsilon$ is arbitrary, as a consequence,
\begin{align}\label{eq:roe}
\inf_{\varphi\in\cA_t}\sup_{\bQ^\psi\in\cQ_{y,t}^\varphi}\left(\bE_{\bQ^\psi}[\ell(X_T)]-\rho_{t:T-1}(\psi)\right)\leq V_t(y).
\end{align}
Combine \eqref{eq:loe} and \eqref{eq:roe}, we conclude that
\begin{align*}
\inf_{\varphi\in\cA_t}\sup_{\bQ^\psi\in\cQ_{y,t}^\varphi}\left(\bE_{\bQ^\psi}[\ell(X_T)]-\rho_{t:T-1}(\psi)\right)=V_t(y).
\end{align*}
\end{proof}

The above theorem shows that the problem~\eqref{eq:ar2} is solved via the Bellman equation~\eqref{eq:bellman1}.
In the next section, we will compare the approach \eqref{eq:ar2} to some different methods that can be used to handle the optimization problem under dependence uncertainty.

\subsection{Comparison to Robust Control Problem via Empirical Distribution}\label{sec:comparison}

In this section, we will compare three different setups that can be used for solving the proposed problem.
One of other approaches that is viable in this setup is, as we mentioned earlier, the multidimensional version of the methodology discussed in \cite{BC2022}.
Briefly speaking, one can build confidence regions in terms of Wasserstein balls centered at the empirical distribution $\widehat{F}_t$, say, denoted as $\cC^{\alpha,e}_t(\widehat{F}_t)$.
Then, formulate the adaptive robust problem that optimizes the expected loss/utility against the worst-case model in $\cC^{\alpha,e}_t(\widehat{F}_t)$.
Obviously, doing so will ignore the available information of the marginals $F^{(i)}_*$, $i\in\cN$.
However, for large value of $T$, such method will have a decent performance as the corresponding value function will get quite closed to the true one when $t$ approaches $T$ (cf. \cite{BC2022}).
The associated Bellman equation is given as follows
\begin{align}\label{eq:ar_emp}
V^e_t(y')=\inf_{a\in A}\sup_{F\in\cC^{\alpha,e}_t(y')}\bE_F[V^e_{t+1}(\mathbf{G}(t,y',a,Z_{t+1}))], \quad t\in\cT',
\end{align}
where $y'$ is the state of the process $Y'_t=(X_t,\widehat{F}_t)$, $t\in\cT'$.
The third method is somewhat similar to the one we just described but utilizes the information of the known marginals.
One can impose essentially the same penalty function as in this work and force the worst-case distribution to have correct marginals.
To this end, we define the following one-step penalty function
\begin{align*}
\rho^m_t(F) = \sum_{i\in\cN}\sup_{f_i\in C_b(\bS^{(i)})}\left(\bE_{F^{(i)}}[f_i]-\bE_{F^{(i)}_*}[f_i]\right),
\end{align*}
for any $F\in\cC^{\alpha,m}_t(\widehat{F}_t)$, which is the Wasserstein ball centered at $\widehat{F}_t$ that might have a different radius compared to $C^{\alpha,e}_t(\widehat{F}_t)$.
Also, the above $\bS^{(i)}$ is the support of $Z^{(i)}_{t+1}$, $i\in\cN$. 
The corresponding Bellman equation is
\begin{align}\label{eq:ar_emp_m}
V^m_t(y')=\inf_{a\in A}\sup_{F\in\cC^{\alpha,m}_t(y')}\left(\bE_F[V^m_{t+1}(\mathbf{G}(t,y',a,Z_{t+1}))]-\rho_t(F)\right), \quad t\in\cT',
\end{align}
where $y'$ is defined the same way as in \eqref{eq:ar_emp}.

Now the question is that which framework is the best in what situation, and can the methods with imposed penalty functions take advantage of the available information of the marginals and outperform the approach that ignores it?
First off, note that one can always take $C^{\alpha,m}_t=C^{\alpha,e}_t$ backed by the concentration result.
Then, due to the presence of the penalty function $\rho_t(F)$, framework \eqref{eq:ar_emp_m} is essentially searching for the worst-case distribution in a smaller set compared to \eqref{eq:ar_emp}.
Therefore, we have
\begin{align*}
V_t^m(y')\leq V_t^e(y').
\end{align*}
In other words, \eqref{eq:ar_emp_m} yields smaller loss which is meant to be minimized.
In this sense, the solution corresponding to \eqref{eq:ar_emp_m} is better than the one corresponding to \eqref{eq:ar_emp}.

The comparison between \eqref{eq:ar2} and \eqref{eq:ar_emp_m} is much more complicated, and it turns out that the preference depends on additional properties of $F^*$.
To this end, we first discuss the relationship between the uncertainty sets $\cC^{\alpha}_t(y)$ and $\cC^{\alpha,e}_t(y')$ where $y=(x,\widehat{C})$ and $y'=(x,\widehat{F})$ such that $\widehat{C}=\widehat{F}\circ F^{-1}_*$.
This means that both methods use the exact same information at every time step $t\in\cT'$.
In the first case, when
there exists a constant $B>1$ such that
\begin{align}\label{eq:lip1}
\left\|F_*\circ z_1-F_*\circ z_2\right\|\leq B\|z_1-z_2\|,
\end{align}
where
\begin{align*}
F_*\circ z=\left(F^{(1)}_*(z^{(1)}),\ldots,F^{(n)}_*(z^{(n)})\right).
\end{align*}
Hence, for any $f_1, f_2$ on $[0,1]^n$, such that
$$
|f_1(u_1)-f_2(u_2)|\leq\|u_1-u_2\|^p, \quad u_1,u_2\in [0,1]^n,
$$
we get
\begin{align*}
|f_1(F_*\circ z_1)-f_2(F_*\circ z_2)|\leq\|F_*\circ z_1-F_*\circ z_2\|^p\leq B^p\|z_1-z_2\|^p, \quad z_1,z_2\in\bR^n.
\end{align*}
Following such obervation, we have for any $C$
\begin{align}\label{eq:radius_comp}
d_{W,p}^p(\widehat{C},C)&=\sup_{f_1,f_2}\left(\int_{[0,1]^n}f_1(u)d\widehat{C}(u)-\int_{[0,1]^n}f_2(u)dC(u)\right) \nonumber\\
&=\sup_{f_1,f_2}\left(\int_{\bR^n}f_1 (F_*\circ z)d\widehat{C}(F_*\circ z)-\int_{\bR^n}f_2(F_*\circ z)d C(F_*\circ z)\right)\nonumber\\
&\leq B^p\sup_{g_1,g_2}\left(\int_{\bR^n}g_1(z)d\widehat{C}(F_*\circ z)-\int_{\bR^n}g_2(z)d C(F_*\circ z)\right)\nonumber\\
&=B^p d^p_{W,p}(\widehat{F},C\circ F_*),
\end{align}
where $|g_1(z_1)-g_2(z_2)|\leq\|z_1-z_2\|^p$ for any $z_1,z_2\in\bR^n$.
Thus, for the radius $r^m(\alpha,t_0,t)$ of $\cC^{\alpha,m}_t(y')$, it suffices to take $r^m(\alpha,t_0,t)=r(\alpha,t_0,t)/B$.
Note that, in theory one should choose $r^e(\alpha,t_0,t)$, which is the radius of $C^{\alpha,e}_t$, as $r^e(\alpha,t_0,t)=r(\alpha,t_0,t)$.
Hence, in this case, \eqref{eq:ar_emp_m} will perform better than \eqref{eq:ar_emp} because it has a smaller associated Wasserstein ball.

On the other hand, between \eqref{eq:ar2} and \eqref{eq:ar_emp_m}, in view of \eqref{eq:radius_comp}, we obtain that for any $F\in\cC^{\alpha,m}_t(y')$, there exists a corresponding $C\in\cC^{\alpha}_t(y)$.
Consequently, we can view $\cC^{\alpha,m}_t$ as a ``smaller'' set than $\cC^{\alpha}_t$.
Moreover, we immediately get that
\begin{align*}
V^m_t(y')\leq V_t(y),
\end{align*}
and \eqref{eq:ar_emp_m} outperforms \eqref{eq:ar2}.
One should note that, in this case, it is unclear which one is better between \eqref{eq:ar2} and \eqref{eq:ar_emp_m}.

The other case is that each $\bS^{(i)}$, $i\in\cN$, is a compact subset of $\bR$, and we suppose that there exists a constant $B'>0$ such that
\begin{align}\label{eq:lip2}
\|F_*\circ z'_1-F_*\circ z'_2\|\geq B'\|z'_1-z'_2\|,
\end{align}
for any $z'_1,z'_2\in\prod_{i\in\cN}\bS^{(i)}$.
Under these assumptions, for any $g_1,g_2$ on $\bR^n$, such that
$$
|g_1(z'_1)-g_2(z'_2)|\leq\|z'_1-z'_2\|^p, \quad z'_1,z'_2\in\prod_{i\in\cN}\bS^{(i)},
$$
we obtain
$$
|g_1(F_*^{-1}\circ u_1)-g_2(F_*^{-1}\circ u_2)|\leq\|F_*^{-1}\circ u_1-F_*^{-1}\circ u_2\|^p\leq\frac{1}{B'}\|u_1-u_2\|, \quad u_1,u_2\in[0,1]^n.
$$
Then, for any $F$ whose associated random variable has support $\prod_{i\in\cN}\bS^{(i)}$, and $F=C\circ F_*$ for some copula $C$, we have
\begin{align*}
d^p_{W,p}(\widehat{F},F)=&\sup_{g_1,g_2}\left(\int_{\bR^n}g_1(z)d\widehat{F}(z)-\int_{\bR^n}g_2(z)dF(z)\right)\\
=&\sup_{g_1,g_2}\left(\int_{\prod_{i\in\cN}\bS^{(i)}}g_1(z)d\widehat{F}(z)-\int_{\prod_{i\in\cN}\bS^{(i)}}g_2(z)dF(z)\right)\\
=&\sup_{g_1,g_2}\left(\int_{\prod_{i\in\cN}\bS^{(i)}}g_1(z)d\widehat{C}(F_*\circ z)-\int_{\prod_{i\in\cN}\bS^{(i)}}g_2(z)dC(F_*\circ z)\right)\\
=&\sup_{g_1,g_2}\left(\int_{[0,1]^n}g_1(F_*^{-1}(u)d\widehat{C}(u)-\int_{[0,1]^n}g_2(F_*^{-1}(u)dC(u)\right)\\
\leq&\frac{1}{B'}\sup_{f_1,f_2}\left(\int_{[0,1]^n}f_1(u)d\widehat{C}(u)-\int_{[0,1]^n}f_2(u)d C(u)\right)\\
=&\frac{1}{B'}d^p_{W,p}(\widehat{C},C),
\end{align*}
where $|f_1(u_1)-f_2(u_2)|\leq\|u_1-u_2\|$ for any $u_1,u_2\in[0,1]^n$.
As a result, similarly to the above discussion, we view $\cC^{\alpha}_t$ as a ``smaller'' set than $\cC^{\alpha,m}_t$, and we conclude
\begin{align*}
V_t(y)\leq V^m_t(y')\leq V^e_t(y').
\end{align*}
In other words, the framework proposed in this work will perform the best among all three mentioned approaches.

Finally, we remark that when $\bS^{(i)}$, $i\in\cN$, are compact, and both \eqref{eq:lip1} and \eqref{eq:lip2} hold true, the two
frameworks \eqref{eq:ar2} and \eqref{eq:ar_emp_m} are equivalent. If
neither \eqref{eq:lip1} or \eqref{eq:lip2} is satisfied, it is unclear which one between \eqref{eq:ar2} and \eqref{eq:ar_emp_m} is more preferable.

\section{Uncertain Utility Maximization with Known Marginal Distributions}\label{sec:numerics}

In this section, we will apply the adaptive robust control methodology to the uncertain utility maximization problem where the marginal distributions of the underlying random noise are known.
Note that even though our theory applies to both empirical distribution and perturbed empirical distribution, we will focus only on the case of empirical distribution in this section.
The advantage of using the perturbed empirical distribution is that the resulting $\widehat{C}$ is a copula.
This could be a more sounding approach in some applications, however in this work, it will increase the computation burden which is already heavy.
Hence, we leave the corresponding investigating for potential future studies.

Towards this end, we first discuss the algorithm that we will implement.

\subsection{Numerical Algorithm}\label{sec:algorithm}

From the practical point of view, one major difficulty in solving a robust control problem with Wasserstein uncertainty set such as $\cC^\alpha_t$ is that direct searching through $\cC^\alpha_t(y)$ to find the optimizer $\psi^*_t(y)$ is impossible.
Typically, one uses a duality argument to re-write such problem as a scalar optimization problem.
In \cite{EK18}, the authors provide a rather comprehensive discussion on the tractable reformulations of an optimization problem over the Wasserstein ball.
With some postulation on the loss function, the authors achieve a convex reduction of the worst-case expectation problems.
However, results in \cite{EK18} only apply to the case of optimizing the expected loss while in this work we deal with a different problem as in \eqref{eq:bellman1}
\begin{align}\label{eq:bellman-worst-copula}
\sup_{C\in\cC^{\alpha}_t(y)}\left(\bE_{C\circ F_*}[V_{t+1}(\mathbf{G}(t,y,a,Z_{t+1})]-\rho_t(C)\right).
\end{align}
Due to the non-linearity of the penalty function $\rho_t$, one cannot directly adopt the idea in \cite{EK18} to reformulate problem \eqref{eq:bellman1} for the numerical purpose.

To this end, we recall the paper \cite{GK17} which considers the numerical solution of a 1-period robust optimization problem with Wasserstein uncertainty set centered at the estimated copula which is similar to our worst-case copula optimization problem.
Let $\{z_{-t_0+1},\ldots,z_t\}$ be the historical data of $Z$ up to time $t\in\cT'$, the strong duality result in \cite{GK17} implies that problem \eqref{eq:bellman-worst-copula} can be solved as
\begin{align}\label{eq:tractable}
\inf_{\gamma\in\bR_+,f_i\in C_b(\bR)}\left\{\gamma r(\alpha,t_0,t)^p+\sum_{i=1}^n\int_{\bR}f_i(z)dF_*^{(i)}(z)+\frac{1}{t_0+t}\sum_{j=-t_0+1}^tV^\gamma_{t+1}(\mathbf{G}(t,y,a,z_j)\right\},
\end{align}
where
\begin{align}\label{eq:inter}
V^\gamma_{t+1}(\mathbf{G}(t,y,a,z_j)=\sup_{z\in\bR^n}\left(V_{t+1}(\mathbf{G}(t,y,a,z))-\sum_{i=1}^nf_i(z^{(i)})-\gamma d^p_{F_*}(z,z_j)\right),
\end{align}
and
\begin{align}\label{eq:premetric}
d_{F_*}(\xi,\zeta)=d\left(\left(F_*^{(1)}(\xi^{(1)}),\ldots,F_*^{(n)}(\xi^{(n)})\right),\left(F_*^{(1)}(\zeta^{(1)}),\ldots,F_*^{(n)}(\zeta^{(n)})\right)\right).
\end{align}
\begin{remark}
In general, the premetric $d_{F_*}$ is defined as
\begin{align*}
d_{F_*}(\xi,\zeta)=\liminf_{d(\xi_i,\xi),d(\zeta_i,\zeta)\to0}d\left(\left(F_*^{(1)}(\xi_i^{(1)}),\ldots,F_*^{(n)}(\xi_i^{(n)})\right),\left(F_*^{(1)}(\zeta_i^{(1)}),\ldots,F_*^{(n)}(\zeta_i^{(n)})\right)\right).
\end{align*}
Due to our assumption that $F_*^{(i)}$, $i\in\cN$, are continuous, so $d_{F_*}$ becomes \eqref{eq:premetric}.
\end{remark}

In \cite{GK17}, to use the strong duality result in practice, the authors essentially assume that the range of the random variable $Z_t$, $t\in\cT$, is a finite set.
Such assumption is obviously not true in this work, and hence we will use the following treatment to overcome such obstacle.
Towards this end, denote by $C([0,1])$ the set of all continuous functions on $[0,1]$, we first apply the change of variable to \eqref{eq:tractable} -- \eqref{eq:inter} as follows
\begin{align*}
\inf_{\gamma\in\bR_+,f_i\in C([0,1])}\left\{\gamma r(\alpha,t_0,t)^p+\sum_{i=1}^n\int_0^1f_i\left(F_*^{(i),-1}(u)\right)du+\frac{1}{t_0+t}\sum_{j=-t_0+1}^t V^\gamma_{t+1}\left(\mathbf{G}\left(t,y,a,F_*^{-1}(u_j)\right)\right)\right\},
\end{align*}
where
\begin{align*}
V^{\gamma}_{t+1}\left(\mathbf{G}\left(t,y,a,F^{-1}_*(u_j)\right)\right)=\sup_{u\in[0,1]^n}\left(V_{t+1}\left(\mathbf{G}\left(t,y,a,F_*^{-1}(u)\right)\right)-\sum_{i=1}^nf_i\left(F_*^{(i),-1}(u^{(i)})\right)-\gamma d^p(u,u_j)\right).
\end{align*}
Next, we recall the Weierstrass approximation theorem which indicates that any $f_i\circ F_*^{(i),-1}\in C([0,1])$ can be approximated by polynomials.
In addition, the Bernstein polynomials
$$
\beta_{k,K}(u)=\binom{K}{k}u^k(1-u)^{K-k}, \quad u\in[0,1],\ k=0,\ldots,K,
$$
form a basis for the space of polynomials of degree $K$.
Also, note that for fixed $K$ and any $k=0,\ldots,K$,
$$
\int_{[0,1]}\beta_{k,K}(u)du=\frac{1}{K+1}.
$$
Hence, the above optimization problem is approximated as
\begin{align*}
\inf_{\gamma\in\bR_+,g_{i,k}\in\bR}\left\{\gamma r(\alpha,t_0,t)^p+\sum_{i=1}^n\sum_{k=0}^K\frac{g_{i,k}}{K+1}+\frac{1}{t_0+t}\sum_{j=-t_0+1}^t V^\gamma_{t+1}(\mathbf{G}(t,y,a,F_*^{-1}(u_j)))\right\},
\end{align*}
where
\begin{align*}
V^{\gamma}_{t+1}(\mathbf{G}(t,y,a,F^{-1}_*(u_j)))=\sup_{u\in[0,1]^n}\left(V_{t+1}(\mathbf{G}(t,y,a,F_*^{-1}(u)))-\sum_{i=1}^n\sum_{k=0}^Kg_{i,k}\beta_{k,K}(u^{(i)})-\gamma d^p(u,u_j)\right).
\end{align*}
By including the optimization over feasible controls, the inf-sup problem we need to solve is
\begin{align}\label{eq:bellman2}
\inf_{a\in A,\gamma\in\bR_+,g_{i,k}\in\bR}\left\{\gamma r(\alpha,t_0,t)^p+\sum_{i=1}^n\sum_{k=0}^K\frac{g_{i,k}}{K+1}+\frac{1}{t_0+t}\sum_{j=-t_0+1}^t V^\gamma_{t+1}(\mathbf{G}(t,y,a,F_*^{-1}(u_j)))\right\},
\end{align}
where
\begin{align}\label{eq:constr}
V^{\gamma}_{t+1}(\mathbf{G}(t,y,a,F^{-1}_*(u_j)))=\sup_{u\in[0,1]^n}\left(V_{t+1}(\mathbf{G}(t,y,a,F_*^{-1}(u)))-\sum_{i=1}^n\sum_{k=0}^Kg_{i,k}\beta_{k,K}(u^{(i)})-\gamma d^p(u,u_j)\right).
\end{align}

With regard to designing a numerical algorithm to solve \eqref{eq:bellman2} -- \eqref{eq:constr}, we first note that the inf problem in \eqref{eq:bellman2} and the sup problem in \eqref{eq:constr} have dimensions $m+n*(K+1)+1$ and $n$, respectively.
Hence, we have a non-trivial high dimensional inf-sup optimization problem for which a solver using brute force will be extremely inefficient and virtually impossible.
On top of the inherent high dimension in optimization, it also requires large amount of computation cost to evaluate $\frac{1}{t_0+t}\sum_{j=-t_0+1}^t V^\gamma_{t+1}(\mathbf{G}(t,y,a,F_*^{-1}(u_j)))$ given that $t_0$ is decently big.
In view of these difficulties, we propose to use the stochastic gradient descent ascent (SGDA) method to overcome the obstacles.

To explain the idea of SGDA in our work, for any fixed $t\in\cT'$, $y\in E_Y$, we denote
\begin{align*}
\hat{v}_t(a,\gamma,g,u;\hat{u})=&\gamma (r(\alpha,t_0,t)^p-d^p(u,\hat{u}))+\sum_{i=1}^n\sum_{k=0}^K\left(\frac{g_{i,k}}{K+1}-g_{i,k}\beta_{k,K}(u^{(i)})\right)\\
&+V_{t+1}(\mathbf{G}(t,y,a,F^{-1}(u))),
\end{align*}
where $\hat{u}\in\{u_j, j=-t_0+1,\ldots,t\}$.
Such formulation comes from the considered loop as follows.
Given some $(a(l),\gamma(l),g(l),u(l))$, $l=0,1,\ldots$,
\begin{enumerate}
\item
uniformly simulate $\hat{u}(l)\in\{u_j, j=-t_0+1,\ldots,t\}$;
\item
update $(a(l),\gamma(l),g(l),u(l))$ as
\begin{align*}
a(l+1) &= a(l)-\eta(l)\frac{\partial}{\partial a}\hat{v}_t(a(l),\gamma(l),g(l),u(l);\hat{u}(l)),\\
\gamma(l+1) &= \gamma(l)-\eta(l)\frac{\partial}{\partial \gamma}\hat{v}_t(a(l),\gamma(l),g(l),u(l);\hat{u}(l)),\\
g(l+1) &= g(l)-\eta(l)\frac{\partial}{\partial g}\hat{v}_t(a(l),\gamma(l),g(l),u(l);\hat{u}(l)),\\
u(l+1) &= u(l)+\eta(l)\frac{\partial}{\partial u}\hat{v}_t(a(l),\gamma(l),g(l),u(l);\hat{u}(l)),
\end{align*}
where $\eta(l)$ denote the step size;
\item
Goto 1.
\end{enumerate}
The above loop will stop when there is no improvement of $(a(l),\gamma(l),g(l),u(l))$ anymore, and we denote the corresponding number $l$ of iterations as $l^*$.

To apply SGDA, one needs to have a functional representation of $V_t$, $t=1,\ldots,T-1$, such that the above gradients of $\hat{v}_t$ can be quickly computed.
To construct such functional approximation, we first choose $N$ so-called design points $y_t^j$, $j=1,\ldots,N$, at which we solve for $V_t(y_t^j)$, $j=1,\ldots,N$, and then build a regression model $\widetilde{V}_t$ based on the training data $(y_t^j,V_t(y_t^j))$, $j=1,\ldots,N$.
For the choice of the regression model, we will use the Gaussian process (GP) with kernel function $\mathbf{k}(\cdot,\cdot)$ of the Matern-3/2 type.
For detailed discussion of GP, one can refer to \cite{RW06}. We also summarize the basic idea of GP in earlier works (cf. \cite{CL2019}, \cite{BCC2020}, \cite{CM2020}, \cite{BC2022}).

When it comes to regression, one needs to note that the second component $\widehat{C}$ of the state variable $y$ is in general a distribution and hence infinitely dimensional, which cannot be regressed against numerically.
To such end, we will approximate each $\widehat{C}$ with its first $m$ moments of marginals and covariance among marginals.
By doing so, the dimension of the state variable $y$ is reduced to $mn+\binom{n}{2}+1$ and we denote the approximating vector by $\tilde{y}$.

The rationale behind choosing GP with the Matern-3/2 kernel is twofold.
On one hand, it is a regression model sophisticated enough to capture the complicated structure of $V_t$ with minimal assumptions, as Matern-3/2 kernel is used for regressing the function that has first order derivative.
On the other hand, the gradients of $\widetilde{V}_t$, and in turn of $\hat{v}_t$, can be analytically computed which is of particular importance to our implementation.
To see this, recall that given $y^j_t$, $j=1,\ldots,N$, the resulting regression model has the following representation
\begin{align}\label{eq:v_tilde}
\widetilde{V}_t(\tilde{y}) = (\mathbf{k}(\tilde{y},\tilde{y}^1_t),\ldots,\mathbf{k}(\tilde{y},\tilde{y}^N_t))[\mathbf{K}+\varepsilon^2\mathbf{I}]^{-1}(V_t(y^1_t),\ldots,V_t(y^N_t))^\top.
\end{align}
The above $\varepsilon^2\mathbf{I}$ with $\mathbf{I}$ being the identity matrix is the perturbation added to the matrix $\mathbf{K}$ to ensure that the latter is invertible.
The matrix $\mathbf{K}$ satisfies that $\mathbf{K}_{ij}=\mathbf{k}(\tilde{y}^i_t,\tilde{y}^j_t)$ and
\begin{align*}
\mathbf{k}(\tilde{y}^1,\tilde{y}^2)=\left(1+\sqrt{3}d_s(\tilde{y}^1,\tilde{y}^2)\right)\exp\left(-\sqrt{3}d_s(\tilde{y}^1,\tilde{y}^2)\right),
\end{align*}
where $d_s$ is the scaled Euclidean distance between $y^1$ and $y^2$ with parameter $s$:
\begin{align*}
  d\left(\left(\frac{\tilde{y}^{1,(1)}}{s^{(1)}},\ldots,\frac{\tilde{y}^{1,(n)}}{s^{(n)}}\right),\left(\frac{\tilde{y}^{2,(1)}}{s^{(1)}},\ldots,\frac{\tilde{y}^{2,(n)}}{s^{(n)}}\right)\right),
\end{align*}
where $d(\cdot,\cdot)$ is the usual Euclidean distance.
The vector $s$ is called the length scale and fitting a GP model means to use the maximum likelihood method to find the optimal value of $s$.
After $\widetilde{V}_t$ is fitted, the product $[\mathbf{K}+\varepsilon^2\mathbf{I}]^{-1}(V_t(y^1_t),\ldots,V_t(y^N_t))^\top$ in \eqref{eq:v_tilde} is a fixed constant vector $(\nu^1_t,\ldots,\nu^N_t)$.
Therefore, \eqref{eq:v_tilde} can be rewritten as $\widetilde{V}_t(\tilde{y})=\sum_{j=1}^N\nu^j_t\mathbf{k}(\tilde{y},\tilde{y}^j_t)$.
As a result, gradient of $\widetilde{V}_t$ is a linear combination of the gradients of $\mathbf{k}(\tilde{y},\tilde{y}^j_t)$ and with the explicit formula of $\mathbf{k}$ being available, its gradient can therefore be computed analytically.
There is no need to stress how important the capability of computing the gradient without any numerical approximation is in a gradient based optimization algorithm.
Our solver for problem \eqref{eq:bellman2} -- \eqref{eq:constr} is summarized as follows.
For $t=T-2,\ldots,0$,
\begin{enumerate}
\item
Assume that $V_{t+1}(y^i_{t+1})$ and $\varphi^{*,\varepsilon}_{t+1}(y^i_{t+1})$, $i=1,\ldots,N$, are computed. Fit the GP models $\widetilde{V}_{t+1}$ and $\widetilde{\varphi}^{*,\varepsilon}_{t+1}$ by using the training data $(y^i_{t+1},V_{t+1}(y^i_{t+1}))$, and $(y^i_{t+1},\varphi^{*,\varepsilon}_{t+1}(y^i_{t+1}))$, $i=1,\ldots,N$, respectively.
\item
Choose $y^i_t\in E_Y$, $i=1,\ldots,N$.
\item
For each $y^i_t$, choose initial guesses $(a(0),\gamma(0),g(0),u(0))$, and use SGDA to compute\\
$(a(l^*),\gamma(l^*),g(l^*),u(l^*))$.
Set $V_t(y^i_t)=\hat{v}_t(a(l^*),\gamma(l^*),g(l^*),u(l^*);\hat{u}(l^*))$, and $\varphi^{*,\varepsilon}_t(y^i_t)=a(l^*)$.
\item
Goto 1: start the next recursion for $t-1$.
\end{enumerate}
To analyze the performance of the computed optimal control $\varphi^{*,\varepsilon}$, we use forward simulation to estimate the mean terminal loss over the out-of-sample paths.
For $t=0,\ldots,T-1$,
\begin{enumerate}
  \item
  Draw $N'>0$ i.i.d. $Z^1_{t+},\ldots,Z^{N'}_{t+1}$ from the true distribution corresponding to $F_*$.
  \item
  All paths start from the initial $y_0$. The state along path $i$ is updated according to\\
  $\mathbf{G}(t,y^i_t,\widetilde{\varphi}^{*,\varepsilon}_t(y^i_t),Z^i_{t+1})$, $i=1,\ldots,N'$.
  \item
  Obtain the terminal state $X^i_T$, $i=1,\ldots,N'$, and compute
  $$
  V^c:=\frac{1}{N'}\sum_{i=1}^{N'}\ell(X^i_T)
  $$
  as the estimated mean terminal loss.
\end{enumerate}
In Section~\ref{sec:comparison}, we know that there is no complete theoretical result of the comparison between our adaptive robust method based on copula and the approach of using empirical distribution without marginal information.
Hence, we will focus on such comparison when presenting our numerical results.

\begin{figure}[H]
\centering
\begin{tabular}{cc}
\includegraphics[height=1.9in]{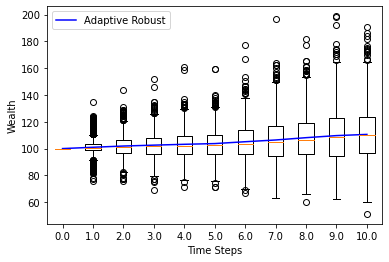} &
\includegraphics[height=1.9in]{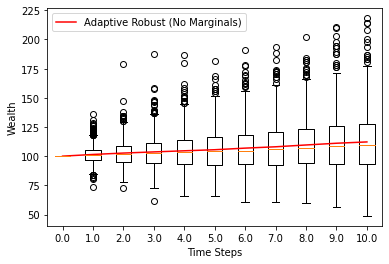} \\
\includegraphics[height=1.9in]{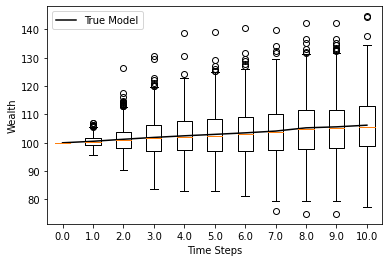} &
\includegraphics[height=1.9in]{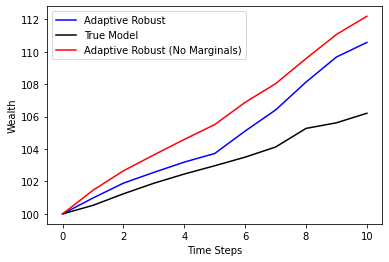} \\
\end{tabular}
\caption{Evolution of the wealth along out-of-sample paths. Upper left: box-plot of the wealth generated by $\varphi^{*,\varepsilon}$; Upper right: box-plot of the wealth generated by $\varphi^e$; Bottom left: box-plot of the wealth generated by $\varphi^{\text{tr}}$; Bottom right: comparison of mean wealth.}
\label{fig:wealth}
\end{figure}

\subsection{Numerical Results}

In this section, we apply the algorithm described above to the uncertain utility maximization problem.
We take $d=2$ and consider two different stocks where $Z_{t+1}$ is the vector of log-returns from time $t$ to $t+1$, which follows a multi-dimensional normal distribution.
Correspondingly, let $X_t$ be the wealth at time $t$ with the dynamics
$$
X_t=X_{t-1}\left((1-\varphi_t^{(1)}-\varphi_t^{(2)})(1+r)+\varphi_t^{(1)}e^{Z_t^{(1)}}+\varphi_t^{(2)}e^{Z_t^{(2)}}\right).
$$
The above $\varphi_t^{(i)}$, $i=1,2$, is the proportion of wealth invested in stock $i$.
The constant $r$ and $Z^{(i)}_t$, $i=1,2$, are the interest rate and log-return of stock $i$, respectively.
In addition, we choose the loss function $\ell$ such that $-\ell=U$ which is the exponential utility function.
Then, we are effectively facing a utility maximization problem involving two different stocks under model uncertainty.

We compare three different types of strategies: 1. adaptive robust control method with known marginals; 2. adaptive robust control method based on empirical distribution without using marginal information; 3. optimal control without uncertainty.
We denote by $\varphi^{*,\varepsilon}$, $\varphi^e$, and $\varphi^{\text{tr}}$ the corresponding strategies, respectively.
We also use $V^c$, $V^e$, and $V^{\text{tr}}$ to denote the corresponding estimated expected utility, respectively.
In Section~\ref{sec:comparison}, we find that it is unclear which of $\varphi^{*,\varepsilon}$ and $\varphi^e$ will perform better when the marginal CDFs are Lipschitz continuous.
Hence, we will compare these two strategies for Lipschitz marginals by using numerical results presented in the sequel.

To proceed, we take the following values for our parameters: $t_0=400$, $T=10$, $N=N'=1000$, $\alpha=0.1$, $p=2$, and $m=2$.
Regarding the true distribution $F_*$, it is bivariate normal with mean $\mu=(0.09,0.13)$ and covariance matrix $\Sigma=\bigl( \begin{smallmatrix}0.25^2 & 0.85\cdot0.25\cdot0.4\\ 0.85\cdot0.25\cdot0.4 & 0.4^2\end{smallmatrix}\bigr)$ where all numbers are annualized.
Finally, the annual interest rate is $r=0.02$.
With $T=10$, it means that we consider 10 trading periods in a year.

\begin{table}[H]
\centering
\begin{tabular}{c|ccc}
\hline
 \multicolumn{1}{c}{ }& \multicolumn{1}{||c}{AR} & \multicolumn{1}{|c}{AR (No Marginals)} & \multicolumn{1}{|c}{TR} \\
  \hline
   $V$  & \multicolumn{1}{||c}{\ \ 19.8726\ \ } & \multicolumn{1}{|c}{19.8659} & \multicolumn{1}{|c}{\ \ 19.8857\ \ }\\
 $\text{var}(X_T)$   & \multicolumn{1}{||c}{441.7334} & \multicolumn{1}{|c}{628.7182} & \multicolumn{1}{|c}{118.3159}\\
 $q_{0.30}(X_T)$  & \multicolumn{1}{||c}{92.4007} & \multicolumn{1}{|c}{90.4748} & \multicolumn{1}{|c}{96.3813}\\
 $q_{0.90}(X_T)$  & \multicolumn{1}{||c}{136.7146} & \multicolumn{1}{|c}{144.7677} & \multicolumn{1}{|c}{121.2001}\\
 $\text{max}(X_T)$  & \multicolumn{1}{||c}{190.4936} & \multicolumn{1}{|c}{190.4936} & \multicolumn{1}{|c}{144.6299}\\
 $\text{min}(X_T)$  & \multicolumn{1}{||c}{51.2641} & \multicolumn{1}{|c}{48.7990} & \multicolumn{1}{|c}{77.4599}\\
 \hline
\end{tabular}
\bigskip
\caption{Expected utility, variance, 30\%-quantile, 90\%-quantile, maximum, and minimum of the out-of-sample terminal wealth. AR: adaptive robust; TR: no uncertainty.}
\label{table:table1}
\end{table}

Table~\ref{table:table1} shows that, in terms of the expected utility, adaptive robust using marginals performs better than the approach without using information of marginals, but worse than the case of knowing the true model.
Similar comparison results hold true if we look at the 30\% quantile and the minimum value of the terminal wealth.  
On the contrary, $\varphi^e$ produces the highest variance, 90\% quantile, and maximum of the terminal wealth, while $\varphi^{\text{tr}}$ generates the lowest values in these aspects.
These observations can be obtained by viewing the box plots in Figure~\ref{fig:wealth}.
In addition, adaptive robust without information of marginals has the highest mean wealth while knowing the true model gives the lowest value.

As we can see, when the investor knows more about the true model, the resulting terminal wealth excels in the risk management aspects such as variance, 30\% quantile, and minimum value.
Knowing less information of the true model will produce higher mean, 90\% quantile, and maximum of the wealth which signal a strategy that is relatively more aggressive and profit seeking.
By looking at the expected utility which takes into account both profit seeking and risk aversion, we notice that the more information we know about the model the higher mean utility we obtain.
With such numerical results, we conclude that, in general, knowing more about true model leads to a more balanced and less risky optimal strategy.

\bibliographystyle{alpha}
\bibliography{nonparametric}

\end{document}